\providecommand{\U}[1]{\protect\rule{.1in}{.1in}}
\newtheorem{theorem}{Theorem}
\newtheorem{corollary}[theorem]{Corollary}
\newtheorem{definition}[theorem]{Definition}
\newtheorem{example}[theorem]{Example}
\newtheorem{lemma}[theorem]{Lemma}
\newtheorem{proposition}[theorem]{Proposition}
\newtheorem{remark}[theorem]{Remark}
\newenvironment{proof}[1][Proof]{\noindent\textbf{#1.} }{\ \rule{0.5em}{0.5em}}
\begin{document}

\title{Efficiency analysis for the Perron vector of a reciprocal matrix}
\author{Susana Furtado\thanks{Email: sbf@fep.up.pt. orcid.org/0000-0003-0395-5972. The
work of this author was supported by FCT- Funda\c{c}\~{a}o para a Ci\^{e}ncia
e Tecnologia, under project UIDB/04561/2020.} \thanks{Corresponding author.}\\CMAFcIO and Faculdade de Economia \\Universidade do Porto\\Rua Dr. Roberto Frias\\4200-464 Porto, Portugal
\and Charles R. Johnson \thanks{Email: crjohn@wm.edu. The work of this author was
supported in part by National Science Foundation grant DMS-0751964.}\\Department of Mathematics\\College of William and Mary\\Williamsburg, VA 23187-8795}
\maketitle

\begin{abstract}
In prioritization schemes, based on pairwise comparisons, such as the
Analytical Hierarchy Process, it is necessary to extract a cardinal ranking
vector from a reciprocal matrix that is unlikely to be consistent. It is
natural to choose such a vector only from efficient ones. One of the most used
ranking methods employs the (right) Perron eigenvector of the reciprocal
matrix as the vector of weights. It is known that the Perron vector may not be
efficient. Here, we focus on extending arbitrary reciprocal matrices and show,
constructively, that two different extensions of any fixed size always exist
for which the Perron vector is inefficient and for which it is efficient, with
the following exception. If $B$ is consistent, any reciprocal matrix obtained
from $B$ by adding one row and one column has efficient Perron vector. As a
consequence of our results, we obtain families of reciprocal matrices for
which the Perron vector is inefficient. These include known classes of such
matrices and many more. We also characterize the $4$-by-$4$ reciprocal
matrices with inefficient Perron vector. Some prior results are generalized or completed.

\end{abstract}

\textbf{Keywords}: decision analysis, efficient vector, extension, Perron
vector, reciprocal matrix.

\textbf{MSC2020}: 90B50, 91B06, 05C20, 15B48, 15A18

\section{Introduction}

The Analytic Hierarchy process was introduced in \cite{saaty1977} and is used
in decision making. It is based upon "reciprocal matrices" that represent
pair-wise ratio comparisons among several alternatives. Such matrices also
arise in other multi-criterion decision making models.

An $n$-by-$n$ entry-wise positive matrix $A=\left[  a_{ij}\right]  $ is called
\emph{reciprocal} if $a_{ji}=\frac{1}{a_{ij}},$ when $1\leq i,j\leq n.$ We
denote by $\mathcal{PC}_{n}$ the set of all such matrices. A matrix
$A\in\mathcal{PC}_{n}$ is further said to be \emph{consistent} if
$a_{ij}a_{jk}=a_{ik}$ for all $i,j,k$ (otherwise it is \emph{inconsistent}).
This is the case if and only if there is a positive vector $w=\left[
\begin{array}
[c]{ccc}%
w_{1} & \ldots & w_{n}%
\end{array}
\right]  ^{T}$ such that $a_{ij}=\frac{w_{i}}{w_{j}}$ for all $i,j.$ Such a
vector $w$ is unique up to a factor of scale and cardinally ranks the
alternatives. Any matrix in $\mathcal{PC}_{2}$ is consistent. When $n>2,$
consistency of the ratio comparisons is unlikely. However, a cardinal ranking
vector, also called a weight vector, should still be obtained from a
reciprocal matrix \cite{choo, dij, golany}.

Many ways of constructing a weight vector from a reciprocal matrix $A$ have
been proposed. The classical proposal for such a vector is the (right) Perron
vector \cite{saaty1977, Saaty}. Other proposals include the (entry-wise)
geometric mean of the columns \cite{blanq2006, fichtner86} and any weighted
geometric mean of columns of a reciprocal matrix \cite{FJ3}.

An important property that a weight vector obtained from a reciprocal matrix
should have is efficiency (also called Pareto optimality). Denote by
$\mathcal{V}_{n}$ the set of positive $n$-vectors. A vector $w=\left[
\begin{array}
[c]{ccc}%
w_{1} & \ldots & w_{n}%
\end{array}
\right]  ^{T}\in\mathcal{V}_{n}$ is called \emph{efficient} for $A=\left[
a_{ij}\right]  \in\mathcal{PC}_{n}$ \cite{blanq2006} if, for every other
positive vector $v=\left[
\begin{array}
[c]{ccc}%
v_{1} & \ldots & v_{n}%
\end{array}
\right]  ^{T}\in\mathcal{V}_{n},$%
\[
\left\vert a_{ij}-\frac{v_{i}}{v_{j}}\right\vert \leq\left\vert a_{ij}%
-\frac{w_{i}}{w_{j}}\right\vert \text{, for all }i,j=1,\ldots n,
\]
implies $v$ is a positive multiple of $w,$ i.e. no other consistent matrix
approximating $A$ is unambiguously better than the one associated with $w$.
(Above $|\cdot|$ denotes the absolute value of a real number.) We denote the
set of all efficient vectors for $A\ $ by $\mathcal{E}(A).$

The efficient vectors for a consistent matrix are the positive multiples of
any of its columns (projectively unique). When $A$ is inconsistent, there are
infinitely many (non-proportional) efficient vectors for $A.$ Clearly, any
positive multiple of an efficient vector for $A$ is still efficient.

Vector efficiency has been widely studied for several years. It is known that
a vector $w$ is efficient for $A$ if and only if a certain directed graph
(digraph) $G(A,w),$ constructed from $A$ and $w,$ is strongly connected
\cite{blanq2006} (see Section \ref{sdig}). Any weighted geometric mean of
columns of a reciprocal matrix is efficient \cite{blanq2006, FJ3}. The Perron
vector of a reciprocal matrix may or may not be efficient. Numerical studies
show that the Perron vector is often inefficient in low dimensions but that
this quickly becomes rare in higher dimensions (see acknowledgement). A
structured class of reciprocal matrices for which inefficiency of the Perron
vector occurs was given in \cite{bozoki2014}. In \cite{p6, p2} the authors
show that the Perron vector of some perturbed consistent matrices, more
precisely, reciprocal matrices obtained from a consistent matrix by changing
at most two entries above the main diagonal (and the corresponding reciprocal
entries), is efficient. In particular, any matrix in $\mathcal{PC}_{3}$ has
efficient Perron vector, as it is obtained from a consistent matrix by
changing a pair of reciprocal entries \cite{FJ1}. When three pairs of
reciprocal entries are changed, inefficiency may occur (see \cite{FerFur}).
Other developments have been made concerning efficiency of a vector for a
reciprocal matrix. In \cite{CFF, Fu22}, all efficient vectors for the
perturbed consistent matrices mentioned above have been described. Recently, a
method to generate inductively all efficient vectors for a reciprocal matrix
was provided \cite{FJ2}. Several other aspects of efficiency have been studied
(see \cite{anh, baj, blanq2006, bozoki2014, european}).

Incomplete reciprocal matrices appear in many decision problems. The unknown
entries should be estimated in order to obtain a complete reciprocal matrix
from which a weight vector is extracted. When using the Perron vector as the
weights, it is important that the Perron vector of the completed matrix is
efficient. In this paper we show that any reciprocal matrix $A$ can be
extended to one whose Perron vector is efficient, by adding one row (and the
reciprocal column). If $A$ is inconsistent, an extension with inefficient
Perron vector also exists. However, if $A$ is consistent, we show that any
extension of $A$ resulting from adding one row (and the reciprocal column) has
efficient Perron vector, though inefficiency can always occur by adding two
(or more) rows and columns. We give procedures to construct such extensions.
Structured classes of reciprocal matrices with inefficient Perron vector are
provided. These include the one in \cite{bozoki2014}. In addition, we give a
characterization of all $4$-by-$4$ reciprocal matrices whose Perron vector is inefficient.

The paper is organized as follows. In Section \ref{s2} we introduce some
notation and known results that will be helpful throughout. Some new related
observations are also made. In particular, a converse of a result in
\cite{FJ2} is presented (Theorem \ref{thind}). In Section \ref{sectech} we
give some technical lemmas that will be used in the proofs of the main
results. In Section \ref{sext} we show that there is one, and only one,
reciprocal matrix $A\in\mathcal{PC}_{n}$ with prescribed Perron vector and a
given principal submatrix $B\in\mathcal{PC}_{n-1}$ (Theorem \ref{t1})$.$ In
Section \ref{s5} we show that any matrix $B\in\mathcal{PC}_{n-1}$ can be
extended to a matrix in $\mathcal{PC}_{n}$ with inefficient Perron vector,
unless $B$ is consistent, in which case we prove that such an extension is not
possible (Theorem \ref{t6}), generalizing some recent results. We start by
studying the possible extensions to a matrix with constant row sums and
inefficient Perron vector (Theorem \ref{t2}) and then, based on this result,
give a procedure to construct reciprocal matrices of arbitrary size with a
prescribed principal submatrix and inefficient Perron vector. We also
characterize the matrices in $\mathcal{PC}_{4}$ with inefficient Perron vector
(Theorem \ref{th4by4}). In Section \ref{s6} we show constructively that any
matrix $B\in\mathcal{PC}_{n-1}$ can be extended to a matrix in $\mathcal{PC}%
_{n}$ with efficient Perron vector (Theorem \ref{t7}). Several examples
illustrating the theoretical results are provided. We conclude in Section
\ref{s8} with a summary and some remarks.

\section{Notation and basic lemmas\label{s2}}

\subsection{Notation}

We start by introducing some additional notation used throughout. We denote by
$M_{n}$ the set of all $n$-by-$n$ real matrices.

For $A=[a_{ij}]\in M_{n},$ the principal submatrix of $A$ determined by
deleting (by retaining) the rows and columns indexed by a subset
$K\subset\{1,\ldots,n\}$ is denoted by $A(K)$ $(A[K]);$ we abbreviate
$A(\{i\})$ as $A(i).$ Similarly, if $w$ is a vector$,$ we denote by $w(K)$ the
vector obtained from $w$ by deleting the entries indexed by $K$ and abbreviate
$w(\{i\})$ as $w(i)$. Note that, if $A$ is reciprocal (consistent) then so are
$A(K)$ and $A[K].$

By $\mathbf{J}_{m,n}$ we denote the $m$-by-$n$ matrix with all entries equal
to $1.$ We write $\mathbf{J}_{n}$ for $\mathbf{J}_{n,n}$ and $\mathbf{e}_{n}$
for the column vector $\mathbf{J}_{n,1}.$ By $I_{n}$ we denote the identity
matrix of size $n$.

Given a vector $w,$ we denote by $\operatorname*{diag}(w)$ the diagonal matrix
with diagonal $w.$ If $w$ is positive, we say that $\operatorname*{diag}(w)$
is a positive diagonal matrix.

\subsection{Reciprocal matrices and the Perron vector\label{srec}}

We recall from Perron-Frobenius theory that the spectral radius of a positive
square matrix $A$ is a simple eigenvalue of $A\ $and there is a positive
associated (right) eigenvector, which is called the \emph{Perron vector} of
$A$ (with a possible normalization as, for example, having the last entry
equal to $1$) \cite{HJ}. In fact here, for convenience, we refer to the Perron
vector of $A$ as any positive eigenvector of $A$ (all positive eigenvectors of
$A\ $are positive multiples of one another).

\bigskip The following lemma, stated in the context of reciprocal matrices,
can be easily verified. Observe that, if $A\in\mathcal{PC}_{n}$ is subjected
to either a positive diagonal similarity or a permutation similarity, or both
(a monomial similarity), we get a matrix $A^{\prime}$ in $\mathcal{PC}_{n}.$
Moreover, if $A\ $is consistent then so is $A^{\prime}.$

\begin{lemma}
\label{ldiagon}Let $A\in\mathcal{PC}_{n}$ with Perron vector $w.$ Let $D\in
M_{n}$ be a positive diagonal matrix and let $P\in M_{n-1}$ be a permutation
matrix. Let $S$ be either $D$ or $P\oplus\left[  1\right]  .$ Then $Sw$ is the
Perron vector of $SAS^{-1}.$ Moreover, $\left(  SAS^{-1}\right)
(n)=S(n)A(n)S^{-1}(n)$. If $w(n)$ is the Perron vector of $A(n)$ then
$(Sw)(n)$ is the Perron vector of $\left(  SAS^{-1}\right)  (n).$
\end{lemma}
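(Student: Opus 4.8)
The plan is to verify each assertion by direct computation, treating the two cases $S = D$ and $S = P \oplus [1]$ either uniformly or separately as convenient. First I would recall that $w$ being a Perron vector of $A$ means $Aw = \rho w$ for the spectral radius $\rho = \rho(A) > 0$, with $w$ positive. For the first claim, I would compute $(SAS^{-1})(Sw) = SA(S^{-1}S)w = S(Aw) = S(\rho w) = \rho(Sw)$; since $S$ is invertible with positive entries in both cases, $Sw$ is again a positive vector, hence it is the Perron vector of $SAS^{-1}$ (using that $SAS^{-1} \in \mathcal{PC}_n$, as observed just before the lemma, so Perron--Frobenius applies and the positive eigenvector is unique up to scaling). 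I should note $\rho(SAS^{-1}) = \rho(A)$ since similarity preserves the spectrum.

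Next, for the statement $(SAS^{-1})(n) = S(n)A(n)S^{-1}(n)$, the key observation is that both $D$ and $P \oplus [1]$ fix the index $n$ in the relevant sense: for $S = D = \operatorname{diag}(d_1,\dots,d_n)$, deleting row and column $n$ from $DAD^{-1}$ leaves $D(n) A(n) D(n)^{-1}$ because conjugation by a diagonal matrix acts entrywise as $a_{ij} \mapsto d_i a_{ij} d_j^{-1}$ and this commutes with deleting the last row and column; and $D(n)^{-1} = D^{-1}(n)$ for a diagonal matrix. For $S = P \oplus [1]$, the matrix permutes only the first $n-1$ indices and fixes index $n$, so deleting the last row and column of $(P\oplus[1])A(P\oplus[1])^{-1}$ yields $P A(n) P^{-1} = P A(n) P^T$, and again $S(n) = P = S$ restricted appropriately with $S^{-1}(n) = P^{-1}$. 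In both cases one checks $S(n) A(n) S^{-1}(n)$ is a reciprocal matrix of size $n-1$ (it is a monomial similarity of $A(n)$).

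For the final claim, assuming $w(n)$ is the Perron vector of $A(n)$, I would apply the first part of the lemma to the pair $(A(n), w(n))$ with the similarity $S(n)$ (which is a positive diagonal matrix when $S = D$, or a permutation matrix when $S = P \oplus [1]$ — exactly the hypotheses needed, now at size $n-1$): this gives that $S(n) w(n)$ is the Perron vector of $S(n) A(n) S(n)^{-1}$. Combining with the identity $(SAS^{-1})(n) = S(n)A(n)S^{-1}(n)$ from the previous step, and observing that $S(n) w(n) = (Sw)(n)$ since $S$ fixes index $n$ (the $n$-th entry of $Sw$ is just $d_n w_n$ or $w_n$, and deleting it leaves the first $n-1$ entries transformed by $S(n)$), we conclude $(Sw)(n)$ is the Perron vector of $(SAS^{-1})(n)$. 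None of the steps presents a genuine obstacle; the only point requiring a little care is bookkeeping the commutation of "delete last row/column" with the two types of similarity and the identity $S^{-1}(n) = S(n)^{-1}$, which holds precisely because $S$ has block form respecting the split $\{1,\dots,n-1\} \cup \{n\}$.
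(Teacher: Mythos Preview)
Your proposal is correct and is precisely the kind of direct verification the paper has in mind: the paper does not actually write out a proof of this lemma, stating only that it ``can be easily verified.'' Your argument supplies exactly those routine checks---eigenvector transformation under similarity, block structure of $S$ ensuring that deletion of the last row and column commutes with conjugation, and the identity $(Sw)(n)=S(n)w(n)$---so there is nothing to compare beyond noting that you have made explicit what the paper leaves to the reader.
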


From the next known lemma, whose proof we include for completeness, it follows
the important fact that any reciprocal matrix is diagonally similar to a
unique reciprocal matrix with constant row sums.

\begin{lemma}
\label{ldiag}For any positive matrix $A\in M_{n},$ there is a unique (up to a
positive scalar multiple) positive diagonal matrix $D\in M_{n}$ such that
$DAD^{-1}$ has Perron vector $\mathbf{e}_{n},$ that is, has constant row sums.
\end{lemma}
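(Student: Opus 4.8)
The plan is to construct $D$ explicitly from the Perron vector of $A$ and then read off uniqueness from the simplicity of the Perron eigenvalue. Let $w$ be a (positive) Perron vector of $A$, so that $Aw=\rho w$, where $\rho>0$ is the spectral radius of $A$; this exists by Perron--Frobenius since $A$ is positive. Set $D=\operatorname*{diag}(w)^{-1}$, a positive diagonal matrix, so that $D^{-1}\mathbf{e}_{n}=w$. First I would compute
\[
\left(DAD^{-1}\right)\mathbf{e}_{n}=DAw=D(\rho w)=\rho\, Dw=\rho\,\mathbf{e}_{n}.
\]
Since $DAD^{-1}$ is again a positive matrix and $\mathbf{e}_{n}$ is a positive vector, $\mathbf{e}_{n}$ is (a scalar multiple of) the Perron vector of $DAD^{-1}$; equivalently, $DAD^{-1}$ has all row sums equal to $\rho$. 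This gives existence.

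For uniqueness, suppose $\widetilde{D}$ is any positive diagonal matrix such that $\widetilde{D}A\widetilde{D}^{-1}$ has Perron vector $\mathbf{e}_{n}$, say with common row sum $\mu$. Then $\widetilde{D}A\widetilde{D}^{-1}\mathbf{e}_{n}=\mu\mathbf{e}_{n}$, which rearranges to $A\bigl(\widetilde{D}^{-1}\mathbf{e}_{n}\bigr)=\mu\bigl(\widetilde{D}^{-1}\mathbf{e}_{n}\bigr)$. Thus $\widetilde{D}^{-1}\mathbf{e}_{n}$ is a positive eigenvector of $A$; since a positive matrix has a positive eigenvector only for its spectral radius, necessarily $\mu=\rho$, and by the simplicity of $\rho$ the vector $\widetilde{D}^{-1}\mathbf{e}_{n}$ is a positive multiple of $w$. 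But $\widetilde{D}^{-1}\mathbf{e}_{n}$ is exactly the diagonal of $\widetilde{D}^{-1}$, so $\widetilde{D}^{-1}$ is a positive scalar multiple of $D^{-1}=\operatorname*{diag}(w)$, and hence $\widetilde{D}$ is a positive scalar multiple of $D$.

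There is no real obstacle here: the argument rests only on Perron--Frobenius (existence, positivity, and simplicity of the spectral radius eigenpair) together with the elementary observation that for a diagonal matrix $D$ the vector $D\mathbf{e}_{n}$ is the diagonal of $D$. The one point to state carefully is why the common row sum must equal the spectral radius $\rho$ rather than an arbitrary eigenvalue, which follows because $\mathbf{e}_{n}$, being positive, can only be a Perron eigenvector.
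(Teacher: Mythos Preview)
Your proof is correct and follows essentially the same approach as the paper's: both take $D^{-1}=\operatorname*{diag}(w)$ for $w$ the Perron vector of $A$, verify that $\mathbf{e}_{n}$ is then the Perron vector of $DAD^{-1}$, and deduce uniqueness from the fact that any positive eigenvector of $A$ is a scalar multiple of $w$. The only cosmetic difference is that the paper invokes its Lemma~\ref{ldiagon} for the transformation of Perron vectors under diagonal similarity, whereas you carry out the one-line eigenvector computation directly.
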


\begin{proof}
Let $w$ be the Perron vector of $A$ and $D^{-1}=\operatorname*{diag}(w).$
Since $Dw=\mathbf{e}_{n}$ and, by Lemma \ref{ldiagon}, $Dw$ is the Perron
vector of $DAD^{-1},$ the existence of $D$ follows. As for the uniqueness,
suppose that, for some positive diagonal matrix $D^{\prime},$ the matrix
$D^{\prime}A(D^{\prime})^{-1}$ has Perron vector $\mathbf{e}_{n}.$ Since
$D^{\prime}w$ is the Perron vector of $D^{\prime}A(D^{\prime})^{-1},$ it
follows that $D$ and $D^{\prime}$ are equal (up to a positive multiple).
\end{proof}

\bigskip In what follows we give a sharp lower bound for the sum of all the
entries of a reciprocal matrix.

\begin{lemma}
\label{ll1}The sum of the entries of $A\in\mathcal{PC}_{n}$ is at least
$n^{2},$ with equality if and only if $A=\mathbf{J}_{n}.$
\end{lemma}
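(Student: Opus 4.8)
The plan is to pair each off-diagonal entry $a_{ij}$ with its reciprocal $a_{ji} = 1/a_{ij}$ and exploit the elementary inequality $x + \frac{1}{x} \geq 2$ for $x > 0$, with equality if and only if $x = 1$. First I would split the sum of all entries of $A = [a_{ij}] \in \mathcal{PC}_n$ into the diagonal part and the off-diagonal part. The diagonal contributes exactly $n$, since $a_{ii} = 1$ for all $i$ (this follows from reciprocity: $a_{ii} = 1/a_{ii}$ forces $a_{ii} = 1$). The off-diagonal entries come in $\binom{n}{2}$ reciprocal pairs $\{a_{ij}, a_{ji}\}$ with $i < j$, and each such pair sums to $a_{ij} + \frac{1}{a_{ij}} \geq 2$.

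Combining these, the total sum is at least $n + 2\binom{n}{2} = n + n(n-1) = n^2$. For the equality case, note that equality in $x + \frac{1}{x} \geq 2$ holds exactly when $x = 1$, so the total sum equals $n^2$ if and only if $a_{ij} = 1$ for every pair $i < j$; together with $a_{ii} = 1$ and $a_{ji} = 1/a_{ij} = 1$, this says precisely that $A = \mathbf{J}_n$. Conversely, $\mathbf{J}_n \in \mathcal{PC}_n$ and its entries sum to $n^2$, so the bound is attained and sharp.

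There is essentially no obstacle here; the only point requiring a word of care is confirming $a_{ii} = 1$, which is immediate from the definition of a reciprocal matrix, and ensuring the off-diagonal entries are correctly grouped into reciprocal pairs so that no entry is double-counted or omitted. The argument is a direct application of the AM–GM inequality (or convexity of $x \mapsto x + 1/x$) to each reciprocal pair.
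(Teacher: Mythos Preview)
Your proposal is correct and is essentially the same argument as the paper's own proof: both pair each off-diagonal entry with its reciprocal, apply $x + \tfrac{1}{x} \ge 2$ (equality iff $x=1$), count the $\binom{n}{2}$ pairs, and add the $n$ diagonal ones to obtain the bound $n^2$ with equality iff $A=\mathbf{J}_n$.
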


\begin{proof}
Let $A=[a_{ij}]\in\mathcal{PC}_{n}.$ For any $i,j\in\{1,\ldots,n\},$ with
$i>j,$ we have
\[
a_{ij}+a_{ji}=a_{ij}+\frac{1}{a_{ij}}\geq2,
\]
with equality if and only if $a_{ij}=a_{ji}=1.$ Since there are $\frac
{n^{2}-n}{2}$ such pairs $i,j$ and the diagonal entries of $A$ are $1,$ the
sum of the entries of $A\in\mathcal{PC}_{n}$ is at least $n+2\frac{n^{2}-n}%
{2}=n^{2},$ with equality if and only if all entries of $A$ are equal to $1.$
\end{proof}

\begin{remark}
\label{remlem}From Lemma \ref{ll1}, it follows that, if $r_{1}\geq\cdots\geq
r_{n}$ are the row sums of $A\in\mathcal{PC}_{n},$ then $r_{1}+\cdots
+r_{n}\geq n^{2}.$ This implies $r_{1}\geq n,$ with equality if and only if
$r_{1}=\cdots=r_{n}=n.$
\end{remark}

Since, by Lemma \ref{ldiag}, any $A\in\mathcal{PC}_{n}$ is similar to a
reciprocal matrix $A^{\prime}$ with constant row sums (equal to the Perron
eigenvalue of $A$)$,$ the well-known fact that the Perron eigenvalue of a
reciprocal matrix $A\in\mathcal{PC}_{n}$ is greater than or equal to $n$
follows from Remark \ref{remlem}. Moreover, the Perron eigenvalue is $n$ if
and only if all row sums of $A^{\prime}$ are $n,$ which implies $A^{\prime
}=\mathbf{J}_{n},$ by Lemma \ref{ll1}, that is, $A$ is consistent.

\begin{lemma}
\label{lrowcons}Let $A\in\mathcal{PC}_{n}$ be a consistent matrix. If $r$ is
the smallest row sum of $A$ then $r\leq n,$ with equality if and only if
$A=\mathbf{J}_{n}.$
\end{lemma}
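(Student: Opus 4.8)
The plan is to leverage the structure of a consistent matrix explicitly. Since $A\in\mathcal{PC}_n$ is consistent, there is a positive vector $w=[w_1\ \cdots\ w_n]^T$ with $a_{ij}=w_i/w_j$ for all $i,j$; the row sums are then $r_i=\sum_{j=1}^n w_i/w_j = w_i\left(\sum_{j=1}^n 1/w_j\right)$. Thus the row sums are proportional to the entries of $w$, and the smallest row sum $r$ corresponds to the smallest entry of $w$. Writing $s=\sum_{j=1}^n 1/w_j$ and letting $w_k=\min_i w_i$, we have $r=w_k\, s = w_k\sum_{j=1}^n 1/w_j$.

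\smallskip Next I would bound this product. Since $w_k\le w_j$ for every $j$, we get $w_k/w_j\le 1$ for each $j$, so $r=\sum_{j=1}^n w_k/w_j \le n$. Equality holds if and only if $w_k/w_j=1$ for all $j$, i.e. all $w_j$ are equal, which is exactly the condition that $a_{ij}=1$ for all $i,j$, that is $A=\mathbf{J}_n$. This handles both the inequality and the equality case in one stroke. Alternatively, one could invoke Lemma \ref{ldiag} together with Lemma \ref{ll1}: $A$ is diagonally similar to a reciprocal matrix $A'$ with constant row sums, and the diagonal similarity is $D=\operatorname{diag}(w)^{-1}$ (up to scale), so $A'=DAD^{-1}$ has $(i,j)$ entry $\frac{1/w_i}{1/w_j}\cdot\frac{w_i}{w_j}=1$, hence $A'=\mathbf{J}_n$; but this already shows $A=\mathbf{J}_n$ is forced once $A$ is consistent — which is false, so this route needs the extra observation that $A$ consistent does not mean $A$ has constant row sums. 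The cleaner argument is the direct one via $w$.

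\smallskip I expect no serious obstacle here; the only subtlety is bookkeeping about which similarity produces constant row sums versus the all-ones matrix, and making sure the equality analysis correctly identifies $A=\mathbf{J}_n$ (not merely that the minimal entry equals the others, which is the same thing). I would also double-check that "smallest row sum" is well defined and that the claim is about $r\le n$ rather than $r\ge n$ — the latter bound $r_1\ge n$ for the \emph{largest} row sum was Remark \ref{remlem}, so this lemma is genuinely the complementary statement for the smallest row sum, valid only under consistency.

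\begin{proof}
Let $w=[w_1\ \cdots\ w_n]^T$ be a positive vector with $a_{ij}=\frac{w_i}{w_j}$ for all $i,j$, which exists since $A$ is consistent. The $i$-th row sum of $A$ is
\[
r_i=\sum_{j=1}^n \frac{w_i}{w_j}.
\]
Let $k$ be an index with $w_k=\min_{1\le i\le n} w_i$, so that $r=r_k$. Since $w_k\le w_j$ for every $j$, we have $\frac{w_k}{w_j}\le 1$, and hence
\[
r=\sum_{j=1}^n \frac{w_k}{w_j}\le n.
\]
Equality holds if and only if $\frac{w_k}{w_j}=1$ for all $j$, that is, $w_1=\cdots=w_n$, which is equivalent to $a_{ij}=1$ for all $i,j$, i.e. $A=\mathbf{J}_n$.
\end{proof}
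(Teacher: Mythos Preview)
Your formal proof is correct and essentially identical to the paper's: both use the representation $a_{ij}=w_i/w_j$, identify the smallest row sum with the row indexed by the minimal $w_i$, and bound each term $w_k/w_j\le 1$ to get $r\le n$ with equality iff all $w_i$ are equal. The exploratory paragraph about Lemma~\ref{ldiag} is a detour you yourself flag as unhelpful and can simply be deleted.
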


\begin{proof}
Let $A=[a_{ij}]$ and $r_{i}$ be the sum of the entries in the $i$th row of
$A.$ Since $A$ is consistent, there are $w_{1},\ldots,w_{n}>0$ such that
$a_{ij}=\frac{w_{i}}{w_{j}}$ for all $i,j=1,\ldots,n.$ Let $l$ be such that
$\min_{i=1,\ldots,n}w_{i}=w_{l}.$ Then, the smallest row sum of $A$ is%
\[
r_{l}=\frac{w_{l}}{w_{1}}+\cdots+\frac{w_{l}}{w_{n}}\leq n,
\]
with equality if and only if all $w_{i}$'s are equal, that is, $A=\mathbf{J}%
_{n}.$
\end{proof}

\bigskip

Note that the claim in Lemma \ref{lrowcons} is not true for an arbitrary
$A\in\mathcal{PC}_{n}.$

\subsection{Results about efficiency\label{sdig}}

In \cite{blanq2006} the authors proved a useful result that gives a
characterization of efficiency in terms of a certain digraph. A shorter and
matricial proof of this result can be found in \cite{FJ3}.

\bigskip

Given $A\in\mathcal{PC}_{n}$ and $w=\left[
\begin{array}
[c]{ccc}%
w_{1} & \cdots & w_{n}%
\end{array}
\right]  ^{T}\in\mathcal{V}_{n}$, define $G(A,w)$ as the digraph with vertex
set $\{1,\ldots,n\}$ and a directed edge $i\rightarrow j,$ $i\neq j,$ if and
only if $\frac{w_{i}}{w_{j}}\geq a_{ij}$.

\begin{lemma}
\textrm{\cite{blanq2006}}\label{blanq} Let $A\in\mathcal{PC}_{n}$ and
$w\in\mathcal{V}_{n}.$ The vector $w$ is efficient for $A$ if and only if
$G(A,w)$ is a strongly connected digraph, that is, for all pairs of vertices
$i,j,$ with $i\neq j,$ there is a directed path from $i$ to $j$ in $G(A,w)$.
\end{lemma}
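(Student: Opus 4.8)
The plan is to prove both implications by converting the approximation--domination condition on positive vectors into a monotonicity condition along the edges of $G(A,w)$. The first step is a structural remark about $G(A,w)$ coming from reciprocity: since $a_{ij}a_{ji}=1=\frac{w_i}{w_j}\cdot\frac{w_j}{w_i}$, the inequality $\frac{w_i}{w_j}<a_{ij}$ forces $\frac{w_j}{w_i}>a_{ji}$. Hence between any two distinct vertices $i,j$ there is at least one directed edge, and both edges $i\to j$ and $j\to i$ are present precisely when $\frac{w_i}{w_j}=a_{ij}$; in particular, if only $i\to j$ is present then $\frac{w_i}{w_j}>a_{ij}$ strictly. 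This near-tournament property is what ties the combinatorics of $G(A,w)$ to the arithmetic of $A$.

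For the implication ``$G(A,w)$ strongly connected $\Rightarrow$ $w$ efficient'', I would take any $v\in\mathcal{V}_n$ with $|a_{ij}-\frac{v_i}{v_j}|\le|a_{ij}-\frac{w_i}{w_j}|$ for all $i,j$ and show $v$ is a multiple of $w$. Fixing an edge $i\to j$ of $G(A,w)$, so $|a_{ij}-\frac{w_i}{w_j}|=\frac{w_i}{w_j}-a_{ij}$, the hypothesis confines $\frac{v_i}{v_j}$ to the interval with endpoints $2a_{ij}-\frac{w_i}{w_j}$ and $\frac{w_i}{w_j}$, and in particular $\frac{v_i}{v_j}\le\frac{w_i}{w_j}$, i.e. $\frac{v_i}{w_i}\le\frac{v_j}{w_j}$. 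Thus the scalars $t_k:=v_k/w_k$ are nondecreasing along every directed edge of $G(A,w)$. Given any two vertices, strong connectivity supplies directed paths in both directions, forcing $t_i=t_j$ for all $i,j$; hence $v$ is a positive multiple of $w$, and $w$ is efficient.

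For the converse I would argue by contraposition and exhibit an explicit improving vector. If $G(A,w)$ is not strongly connected, its condensation is a nontrivial acyclic digraph and therefore has a source strongly connected component; let $S$ be its vertex set, a nonempty proper subset of $\{1,\ldots,n\}$ with no edge of $G(A,w)$ entering $S$ from its complement. By the near-tournament property, for every $i\in S$ and $j\notin S$ the edge $j\to i$ is absent, so $i\to j$ is the only edge between $i$ and $j$ and $\frac{w_i}{w_j}>a_{ij}$ strictly (equivalently $\frac{w_i}{w_j}<a_{ij}$ for $i\notin S$, $j\in S$). Now set $v_k=w_k$ for $k\in S$ and $v_k=(1+\varepsilon)w_k$ for $k\notin S$ with $\varepsilon>0$. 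Pairs $i,j$ lying both in $S$ or both outside $S$ leave $\frac{v_i}{v_j}=\frac{w_i}{w_j}$, hence the corresponding errors unchanged; for the finitely many crossing pairs, choosing $\varepsilon$ small enough keeps $\frac{v_i}{v_j}$ on the same side of $a_{ij}$ as $\frac{w_i}{w_j}$ while moving it strictly closer to $a_{ij}$. So $v$ dominates $w$ and is not proportional to it, and $w$ is inefficient.

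The interval bookkeeping and the uniform choice of $\varepsilon$ over finitely many pairs are routine; the one step that carries the weight is extracting the correct partition from a non-strongly-connected digraph --- a source component of the condensation, equivalently a nonempty proper set closed under predecessors --- and noticing that reciprocity promotes the crossing inequalities from weak to strict, which is exactly what makes the perturbation strictly improve (rather than merely preserve) the approximation on the crossing pairs.
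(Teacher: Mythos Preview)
The paper does not prove this lemma; it is quoted from \cite{blanq2006} (with a pointer to a shorter matricial proof in \cite{FJ3}) and used as a black box, so there is no in-paper argument to compare against. Your proof is correct and self-contained: in the forward direction the observation that $t_k=v_k/w_k$ is nondecreasing along every edge of $G(A,w)$, together with strong connectivity, forces all $t_k$ equal; in the converse, picking a source component $S$ of the condensation and scaling the entries outside $S$ by $1+\varepsilon$ is exactly the right move, and you correctly use reciprocity to upgrade the crossing inequalities to strict ones, which is what makes the perturbation a genuine (not merely weak) improvement. This is essentially the classical argument behind the cited result, so nothing is missing.
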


\bigskip We notice some relevant facts used later. \bigskip A digraph $G$ is
strongly connected if and only if its adjacency matrix is irreducible
\cite{HJ}. Thus, if $G(A,w)$ is not strongly connected, the matrix $\left[
\frac{w_{i}}{w_{j}}\right]  -A$ is permutationally similar to a matrix of the
form
\[
\left[
\begin{array}
[c]{cc}%
Q_{1} & >0\\
<0 & Q_{2}%
\end{array}
\right]  ,
\]
for some $Q_{1}\in M_{k}$ and $Q_{2}\in M_{n-k},$ with $1\leq k<n.$ Here $>0$
(resp. $<0)$ denotes a block of appropriate size with all entries positive
(resp. negative).

\bigskip

For $i\in\{1,\ldots,n\},$ $G(A(i),w(i))$ is the subgraph of $G(A,w)$ induced
by vertices $1,\ldots,i-1,i+1,\ldots,n.$ If $w(i)$ is efficient for $A(i)$ and
$w$ is inefficient for $A,$ then $G(A(i),w(i))$ is strongly connected and
$G(A,w)$ is not. Thus, vertex $i$ of $G(A,w)$ is either a sink vertex (that
is, a vertex with outdegree $0$) or a source vertex (that is, a vertex with
indegree $0$).

\bigskip

Next, we recall a result that allows us to simplify our proofs. It concerns
how $\mathcal{E}(A)$ changes when $A$ is subjected to a monomial similarity.

\begin{lemma}
\label{lsim}\textrm{\cite{Fu22}} Suppose that $A\in\mathcal{PC}_{n}$ and
$w\in\mathcal{E}(A).$ If $D\in M_{n}$ is a positive diagonal matrix ($P\in
M_{n}$ is a permutation matrix), then $Dw\in\mathcal{E}(DAD^{-1})$
($Pw\in\mathcal{E}(PAP^{T})$).
\end{lemma}

\bigskip We note that, if $D$ is a positive diagonal matrix, then the digraphs
$G(DAD^{-1},Dw)$ and $G(A,w)$ coincide.

\bigskip

In \cite{FJ1} the efficient extensions for $A\in\mathcal{PC}_{n}$ of an
efficient vector for a principal submatrix of $A$ in $\mathcal{PC}_{n-1}$ were
characterized. That is, for $i\in\{1,\ldots,n\},$ a necessary and sufficient
condition on the $i$th entry of $w\in\mathcal{V}_{n}$ was given for $w$ to be
efficient for $A$ when $w(i)$ is efficient for $A(i).$

In \textrm{\cite{FJ2}} it was shown that, if $w$ is efficient for
$A\in\mathcal{PC}_{n},$ $n>3,$ then there are two $(n-1)$-subvectors of $w$
efficient for the corresponding principal submatrices of $A.$ More formally,
there are $i,j\in\{1,\ldots,n\},$ with $i\neq j,$ such that $w(i)$ is
efficient for $A(i)$ and $w(j)$ is efficient for $A(j).$ We note here, for the
first time, that a converse of this result also holds. For $A\in
\mathcal{PC}_{n},$ with $n>2,$ and $w\in\mathcal{V}_{n},$ if there are two
$(n-1)$-subvectors of $w$ efficient for the corresponding principal
submatrices of $A$, then $w$ is efficient for $A.$

\begin{theorem}
\label{thind}Let $A\in\mathcal{PC}_{n},$ with $n>2$, and $w\in\mathcal{V}_{n}%
$. If there are $i,j\in\{1,\ldots,n\},$ with $i\neq j,$ such that $w(i)$ is
efficient for $A(i)$ and $w(j)$ is efficient for $A(j),$ then $w$ is efficient
for $A.$
\end{theorem}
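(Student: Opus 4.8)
The strategy is to use the digraph characterization of efficiency (Lemma \ref{blanq}) together with the observation recorded just before the statement: if $w$ were inefficient for $A$ while $w(i)$ is efficient for $A(i)$, then vertex $i$ must be either a source or a sink in $G(A,w)$. So assume, for contradiction, that $w$ is inefficient for $A$. Then vertex $i$ is a source or a sink in $G(A,w)$, and likewise vertex $j$ is a source or a sink in $G(A,w)$. Since $i\neq j$, this gives a small number of cases to examine, and in each one I will derive that $G(A,w)$ is in fact strongly connected, contradicting inefficiency.

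The key reduction is that $G(A,w)$ restricted to the vertex set $\{1,\dots,n\}\setminus\{i\}$ equals $G(A(i),w(i))$, which is strongly connected by hypothesis; similarly with $j$ in place of $i$. Write $H_i := G(A(i),w(i))$ and $H_j := G(A(j),w(j))$, both strongly connected, both on $n-1\ge 2$ vertices. Now handle the cases. If $i$ is a sink in $G(A,w)$: every vertex $k\neq i$ has a path to $i$ (there is at least one edge into $i$ since $i$ has positive indegree when $n>2$ — indeed for every $k\ne i$, exactly one of $i\to k$, $k\to i$ is present, and not all can point out of $i$ because... wait, I must be careful: a sink has outdegree $0$, so \emph{every} edge at $i$ points into $i$, hence $k\to i$ for all $k\ne i$, so $i$ is reachable from everything), and conversely from $i$ one reaches every vertex $k\ne i$ by first going to $j$ (if $j\ne i$, and $j$ is reachable only if... ) — here is where the second hypothesis enters. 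If $i$ is a sink, then in particular $j\to i$ is an edge and $i\to j$ is not, so vertex $i$ is \emph{not} a source in $H_j = G(A(j),w(j))$; combined with strong connectivity of $H_j$, vertex $i$ has positive outdegree in $H_j$, say $i\to k_0$ in $H_j$ for some $k_0\notin\{i,j\}$ — but that same edge $i\to k_0$ lives in $G(A,w)$, contradicting $i$ being a sink in $G(A,w)$. The symmetric argument rules out $i$ being a source (using that $j$-is-a-source or $j$-is-a-sink forces an edge at $j$ whose orientation in $G(A,w)$ shows $i$ is not isolated-as-a-source in $H_j$), and likewise for $j$.

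More cleanly: I claim $i$ cannot be a sink in $G(A,w)$. If it were, then $i$ has outdegree $0$ in $G(A,w)$, hence outdegree $0$ in the induced subgraph $H_j$ on $\{1,\dots,n\}\setminus\{j\}$ (provided $i\ne j$, which holds) — but $H_j$ is strongly connected on $\ge 2$ vertices, so every vertex, including $i$, has positive outdegree in $H_j$, a contradiction. Symmetrically $i$ cannot be a source, $j$ cannot be a sink, $j$ cannot be a source. By the pre-statement observation, this forces $w$ to be efficient for $A$. The one point needing care is the induced-subgraph claim: deleting vertex $j$ from $G(A,w)$ yields exactly $G(A(j),w(j))$ because the edge relation $\frac{w_r}{w_s}\ge a_{rs}$ only involves indices $r,s\ne j$; this is the fact already noted in the excerpt. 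The main (only) obstacle is organizing the case analysis so that the roles of $i$ and $j$ are genuinely used symmetrically — but once phrased via "positive outdegree and positive indegree of every vertex in a strongly connected digraph on $\ge 2$ vertices," the proof is immediate, and the hypotheses $n>2$ and $i\ne j$ are exactly what make the induced subgraphs nonempty and make $i$ a vertex of $H_j$.
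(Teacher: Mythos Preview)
Your proof is correct but takes a different route from the paper's. The paper argues \emph{directly}: to show $G(A,w)$ is strongly connected, it fixes any pair $k\neq l$ and exhibits a directed path from $k$ to $l$ by routing inside whichever of the strongly connected subgraphs $G(A(i),w(i))$ or $G(A(j),w(j))$ contains both $k$ and $l$; when $\{k,l\}=\{i,j\}$ it splices two such paths through an auxiliary vertex $p\notin\{i,j\}$ (this is where $n>2$ enters). Your argument instead proceeds by contradiction via the sink/source observation recorded just before the statement: since $i$ is a vertex of the strongly connected subgraph $H_j=G(A(j),w(j))$ on $n-1\ge 2$ vertices, it has positive in- and out-degree there and hence in $G(A,w)$, so $i$ is neither a sink nor a source; the observation then forces $w$ to be efficient. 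Your route is shorter and arguably cleaner once that observation is available, and it makes transparent why \emph{two} efficient subvectors are needed (one to invoke the observation at $i$, the other to supply the strongly connected subgraph $H_j$ containing $i$). The paper's route, on the other hand, is entirely self-contained---it does not rely on the sink/source fact, which itself requires a short reducibility argument. Note that your final sentence about also ruling out $j$ as a sink or source is unnecessary: once $i$ is shown to be neither, the contradiction is already in hand.
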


\begin{proof}
The result is a consequence of Lemma \ref{blanq} and the fact that, if
$G(A(i),w(i))$ and $G(A(j),w(j)),$ $i\neq j,$ are strongly connected, then so
is $G(A,w).$ To see this, let $k,l\in\{1,\ldots,n\}$, $k\neq l.$ If
$j\notin\{k,l\},$ then there is a directed path from $k$ to $l$ in
$G(A(j),w(j))$. If $j\in\{k,l\}$ and $i\notin\{k,l\}$ then there is a directed
path from $k$ to $l$ in $G(A(i),w(i))$. If $k=i$ and $l=j,$ since $n>2,$ there
is a vertex $p\notin\{i,j\}$ in $G(A,w).$ Then, there is a directed path from
$k$ to $p$ in $G(A(j),w(j))$ and a directed path from $p$ to $l$ in
$G(A(i),w(i))$. In all cases, there is a directed path from $k$ to $l$ in
$G(A,w)$.
\end{proof}

\section{Further new lemmas and definitions\label{sectech}}

We start with an analytical lemma that will be crucial in deriving our main results.

\begin{lemma}
\label{lauxder}Let $a_{i}\geq0$ for $i=1,\ldots,k.$ For $x>0,$ define%
\[
f(x)=\frac{1}{x}+\frac{1}{a_{2}+x}+\cdots+\frac{1}{a_{k}+x}+1-a_{1}-x.
\]
Then, $f$ is a strictly decreasing continuous function with range
$\mathbb{R}.$ In particular, there is one and only one $x>0$ such that
$f(x)=0.$ If $a_{i}=0$ for $i=2,\ldots,k,$ $x$ is given by%
\begin{equation}
x=\frac{1-a_{1}+\sqrt{(1-a_{1})^{2}+4k}}{2}. \label{xx}%
\end{equation}

\end{lemma}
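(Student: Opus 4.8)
The plan is to prove the three assertions in turn: continuity and strict monotonicity of $f$, the range being all of $\mathbb{R}$, and the explicit formula in the special case. First I would observe that $f$ is a finite sum of functions each continuous on $(0,\infty)$: the terms $\frac{1}{a_i+x}$ are continuous there since $a_i\geq 0$ forces $a_i+x>0$, and $1-a_1-x$ is linear. Hence $f$ is continuous on $(0,\infty)$. For strict monotonicity, I would differentiate:
\[
f'(x)=-\frac{1}{x^2}-\sum_{i=2}^{k}\frac{1}{(a_i+x)^2}-1<0
\]
for every $x>0$, so $f$ is strictly decreasing. (Alternatively, each summand $\frac{1}{a_i+x}$ is strictly decreasing in $x$ on $(0,\infty)$ and $-x$ is strictly decreasing, so the sum is; this avoids calculus, but the derivative is cleaner to display.)

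Next I would pin down the range. As $x\to 0^{+}$, the term $\frac{1}{x}\to+\infty$ while all remaining terms stay bounded (the $\frac{1}{a_i+x}$ tend to finite limits $\frac{1}{a_i}$ or $+\infty$ only if some $a_i=0$, which only helps, and $1-a_1-x\to 1-a_1$ is finite), so $f(x)\to+\infty$. As $x\to+\infty$, the reciprocal terms all tend to $0$ and $1-a_1-x\to-\infty$, so $f(x)\to-\infty$. Since $f$ is continuous and strictly decreasing on $(0,\infty)$ with these two limits, its range is exactly $\mathbb{R}$ and it is a bijection from $(0,\infty)$ onto $\mathbb{R}$; in particular the equation $f(x)=0$ has a unique positive solution.

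Finally, for the special case $a_2=\cdots=a_k=0$, the function collapses to $f(x)=\frac{k}{x}+1-a_1-x$, because the first term $\frac1x$ together with the $k-1$ terms $\frac{1}{0+x}=\frac1x$ give $\frac{k}{x}$. Setting $f(x)=0$ and multiplying through by $x>0$ yields the quadratic $x^2-(1-a_1)x-k=0$, whose roots are $\frac{(1-a_1)\pm\sqrt{(1-a_1)^2+4k}}{2}$; only the $+$ root is positive (the product of the roots is $-k<0$, so exactly one root is positive), giving formula \eqref{xx}.

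I do not expect a serious obstacle here: the lemma is elementary real analysis. The only point requiring a little care is the behavior as $x\to 0^{+}$ when some $a_i$ may be zero, but since extra $\frac1x$ contributions only reinforce the blow-up to $+\infty$, the argument goes through unchanged; one just notes that the dominant $\frac1x$ term forces the limit regardless.
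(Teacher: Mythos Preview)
Your proof is correct and follows essentially the same approach as the paper: the paper also notes continuity, observes $f'(x)<0$, computes the two limits $\lim_{x\to+\infty}f(x)=-\infty$ and $\lim_{x\to0^{+}}f(x)=+\infty$, and leaves the final formula to ``a simple calculation.'' Your write-up is simply more explicit, spelling out the derivative and the quadratic step that the paper omits.
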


\begin{proof}
Clearly, $f$ is continuous and $f^{\prime}(x)<0$ for any $x>0,$ implying that
$f$ is a strictly decreasing function. Also,%
\[
\lim_{x\rightarrow+\infty}f(x)=-\infty\text{ and }\lim_{x\rightarrow0^{+}%
}f(x)=+\infty.
\]
Thus, the first claim follows which, obviously, implies the second one. The
last claim follows from a simple calculation.
\end{proof}

\bigskip

In the rest of this section we consider a matrix $B\in\mathcal{PC}_{k}$ and
denote by $r_{1}\geq\cdots\geq r_{k}$ the row sums of $B$ in nonincreasing
order. For $x>0,$ let $f$ be the function associated with $B$ defined by%
\begin{equation}
f(x)=\frac{1}{x}+\frac{1}{r_{1}-r_{2}+x}+\cdots+\frac{1}{r_{1}-r_{k}%
+x}+1-r_{1}-x. \label{ff1}%
\end{equation}

\begin{lemma}
\label{lf1}Let $B\in\mathcal{PC}_{k}.$ For $f$ as in (\ref{ff1}), we have
$f(1)\leq0,$ with equality if and only if $B=\mathbf{J}_{k}.$ In particular,
$f(x)=0$ implies $x\leq1.$
\end{lemma}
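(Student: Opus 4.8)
The plan is to evaluate $f(1)$ directly using the definition in (\ref{ff1}) and reduce the inequality $f(1)\le 0$ to a statement about the row sums of $B$. Substituting $x=1$ into (\ref{ff1}) gives
\[
f(1)=\frac{1}{1}+\frac{1}{r_{1}-r_{2}+1}+\cdots+\frac{1}{r_{1}-r_{k}+1}+1-r_{1}-1=\sum_{i=1}^{k}\frac{1}{r_{1}-r_{i}+1}+1-r_{1}.
\]
So the claim $f(1)\le 0$ is equivalent to $\sum_{i=1}^{k}\frac{1}{r_{1}-r_{i}+1}\le r_{1}-1$, and equality in $f(1)\le 0$ corresponds to equality here.

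Next I would introduce the substitution $t_{i}:=r_{1}-r_{i}\ge 0$ for $i=1,\ldots,k$, noting $t_{1}=0$, so that the target inequality reads $\sum_{i=1}^{k}\frac{1}{1+t_{i}}\le r_{1}-1$. The key input is Remark \ref{remlem} (equivalently Lemma \ref{ll1}): the row sums satisfy $r_{1}+\cdots+r_{k}\ge k^{2}$, i.e. $\sum_{i=1}^{k}(r_{1}-r_{i})\le k r_{1}-k^{2}$, that is $\sum_{i=1}^{k}t_{i}\le k(r_{1}-k)$. Also from Remark \ref{remlem}, $r_{1}\ge k$. I would then bound $\sum_{i=1}^{k}\frac{1}{1+t_{i}}$ from above in terms of $\sum_{i=1}^{k}t_{i}$: since $g(t)=\frac{1}{1+t}$ is convex on $[0,\infty)$, it lies below the chord only on bounded intervals, so convexity alone is not immediately enough; instead I would use the elementary bound $\frac{1}{1+t}\le 1-\frac{t}{1+T}$ valid for $0\le t\le T$ where $T=\max_i t_i$, or more simply the tangent-line-type estimate. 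Actually the cleanest route: apply the Cauchy–Schwarz / AM–HM inequality in the form $\sum_{i=1}^{k}\frac{1}{1+t_{i}}\ge \frac{k^{2}}{\sum_{i=1}^{k}(1+t_{i})}$ gives a lower bound, which is the wrong direction, so I would instead argue directly.

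The direct argument I expect to work: since $\frac{1}{1+t}\le 1-t+t^2$ is false in general, I would instead use that on each term $\frac{1}{1+t_i}\le 1$ trivially, but we need to extract a saving proportional to $t_i$. The right elementary fact is $\frac{1}{1+t}\le 1-\frac{t}{1+t_{\max}}$ when $t\le t_{\max}$ — but a bound independent of $t_{\max}$ is preferable. I would therefore instead proceed via the concavity of $t\mapsto \log(1+t)$ or, most robustly, establish the inequality $\sum \frac{1}{1+t_i}\le r_1-1$ by noting $k-\sum_{i=1}^k\frac{1}{1+t_i}=\sum_{i=1}^k\frac{t_i}{1+t_i}$ and we want this $\ge k-(r_1-1)=k+1-r_1$; since $\frac{t}{1+t}$ is concave and increasing, and $\sum t_i\le k(r_1-k)$... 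Here is where the main obstacle lies: getting a clean chain from the single linear constraint $\sum t_i \le k(r_1-k)$ to the desired bound on $\sum \frac{t_i}{1+t_i}$ without extra hypotheses. I expect the resolution is that one does not need the sharp combinatorial bound on $\sum t_i$ at all, but rather a more structural fact: I would look back at whether $B$ having constant-row-sum normalization helps, or use that $r_1\ge k$ together with the observation that the function $f$ at $x=1$ can be compared termwise against the consistent case $B=\mathbf{J}_k$ (where all $r_i=k$, $f(1)=k\cdot 1+1-k-1=0$). That suggests a monotonicity/perturbation argument: show $f(1)$ is maximized, over all $B\in\mathcal{PC}_k$, at $B=\mathbf{J}_k$. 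Concretely, I would fix $r_1$ and show $f(1)=\sum_i \frac{1}{1+t_i}+1-r_1$ is largest when the spread $t_i$ is smallest; since increasing any single $t_i$ strictly decreases $f(1)$, and $t_i=0$ for all $i$ forces $r_1=k$ giving $f(1)=0$, while any deviation from $\mathbf{J}_k$ either increases some $t_i$ (decreasing $f(1)$) or increases $r_1$ (also decreasing the $+1-r_1$ part faster than the gain in the sum, which is bounded by $k$ whereas $r_1$ can grow without bound) — this gives $f(1)<0$ strictly unless $B=\mathbf{J}_k$. The final clause "$f(x)=0$ implies $x\le 1$" is then immediate: $f$ is strictly decreasing by Lemma \ref{lauxder} (applied with the appropriate $a_i$), and $f(1)\le 0$, so the unique positive root of $f$ is at most $1$.
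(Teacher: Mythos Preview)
There is a genuine arithmetic slip that derails the whole argument. In your displayed computation you write
\[
f(1)=\frac{1}{1}+\frac{1}{r_{1}-r_{2}+1}+\cdots+\frac{1}{r_{1}-r_{k}+1}+1-r_{1}-1=\sum_{i=1}^{k}\frac{1}{r_{1}-r_{i}+1}+1-r_{1},
\]
but the left-hand side simplifies to $\sum_{i=1}^{k}\frac{1}{r_{1}-r_{i}+1}-r_{1}$, not $\sum_{i=1}^{k}\frac{1}{r_{1}-r_{i}+1}+1-r_{1}$; the extra ``$+1$'' is spurious. Your resulting target inequality $\sum_{i=1}^{k}\frac{1}{1+t_{i}}\le r_{1}-1$ is actually \emph{false} at $B=\mathbf{J}_{k}$ (where it would read $k\le k-1$), which is why none of the convexity, AM--HM, or perturbation approaches you try can close the gap.

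With the correct expression $f(1)=\sum_{i=1}^{k}\frac{1}{1+t_{i}}-r_{1}$, the ``trivial'' bound you dismissed is exactly what is needed: each $t_{i}=r_{1}-r_{i}\ge 0$ gives $\frac{1}{1+t_{i}}\le 1$, hence $f(1)\le k-r_{1}$, and $r_{1}\ge k$ by Remark~\ref{remlem}. This is precisely the paper's two-line proof. For the equality case, $f(1)=0$ forces $k-r_{1}\ge 0$, hence $r_{1}=k$, and then Remark~\ref{remlem} gives $B=\mathbf{J}_{k}$. Your final sentence on ``$f(x)=0$ implies $x\le 1$'' via Lemma~\ref{lauxder} is correct and matches the paper.
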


\begin{proof}
Since $r_{1}-r_{i}\geq0$ for $i=1,\ldots,k,$ we have%
\[
f(1)=1+\frac{1}{r_{1}-r_{2}+1}+\cdots+\frac{1}{r_{1}-r_{k}+1}-r_{1}\leq
k-r_{1}.
\]
From Lemma \ref{ll1} and Remark \ref{remlem}, we have $k-r_{1}\leq0$ with
equality if and only if $B=\mathbf{J}_{k},$ in which case $f(1)=0.$ The second
claim follows from the first one and Lemma \ref{lauxder}.
\end{proof}

\begin{definition}
We say that $B\in\mathcal{PC}_{k}$ is \emph{well-behaved of type I} if
$r_{1}-r_{k}\geq1,$ and is \emph{well-behaved of type II} if $r_{1}-r_{k}<1$
and
\[
f(1+r_{k}-r_{1})=\frac{1}{1+r_{k}-r_{1}}+\frac{1}{1+r_{k}-r_{2}}+\cdots
+\frac{1}{1+r_{k}-r_{k-1}}+1-r_{k}\geq0.
\]
We say that $B$ is \emph{well-behaved} if it is well-behaved of type I or of
type II.
\end{definition}

Not any matrix in $\mathcal{PC}_{k}$ is well-behaved. A matrix $B\in
\mathcal{PC}_{k}$ is not well-behaved if $r_{1}-r_{k}<1$ and $f(1+r_{k}%
-r_{1})<0.$ Note that the set of matrices $B\in\mathcal{PC}_{k}$ that are not
well-behaved is open.

\bigskip

The designation "well-behaved" follows from the fact that a reciprocal matrix
in $\mathcal{PC}_{n-1}$ that is not well-behaved cannot be extended to a
matrix in $\mathcal{PC}_{n}$ with efficient Perron vector $\mathbf{e}_{n},$ as
will be seen in Section \ref{secextin}.

\begin{example}
\label{ex1}The matrices%
\[
A_{1}=\left[
\begin{array}
[c]{ccc}%
1 & 2 & \frac{3}{5}\\
\smallskip\frac{1}{2} & 1 & 3\\
\frac{5}{3} & \frac{1}{3} & 1
\end{array}
\right]  \text{ and }A_{2}=\left[
\begin{array}
[c]{ccc}%
1 & \frac{6}{5} & 1\\
\frac{5}{6} & 1 & 1\\
1 & 1 & 1
\end{array}
\allowbreak\right]
\]
are well-behaved of types I and II, respectively. The matrices
\[
A_{3}=\left[
\begin{array}
[c]{ccc}%
1 & 5 & \frac{1}{5}\\
\frac{1}{5} & 1 & 5\\
5 & \frac{1}{5} & 1
\end{array}
\right]  \text{ and }A_{4}=\left[
\begin{array}
[c]{ccc}%
1 & \frac{1}{5} & \smallskip\frac{51}{10}\\
5 & 1 & \smallskip\frac{2}{9}\\
\frac{10}{51} & \frac{9}{2} & 1
\end{array}
\right]  ,
\]
(and any sufficiently small reciprocal perturbations of them), are not well-behaved.
\end{example}

Note that the notion of well-behaved is permutation similarity invariant.
Thus, typically, in the discussions to follow, when a row and a column are
added by bordering, it could be as well by inserting anywhere in the matrix.

\bigskip

A matrix in $\mathcal{PC}_{k}$ with constant row sums is not well-behaved of
type I$.$ Thus, from Lemma \ref{lf1}, we have the following.

\begin{lemma}
\label{lrsum}If $B\in\mathcal{PC}_{k}$ has constant row sums and is different
from $\boldsymbol{J}_{k}$ then $B$ is not well-behaved.
\end{lemma}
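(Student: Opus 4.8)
The statement combines two observations. First, a matrix $B\in\mathcal{PC}_k$ with constant row sums cannot be well-behaved of type I: if the row sums are all equal, then $r_1=r_k$, so $r_1-r_k=0<1$, which directly violates the defining inequality $r_1-r_k\geq 1$ for type I. So the only way such a $B$ could be well-behaved is type II. The plan is therefore to rule out type II as well when $B\neq\mathbf{J}_k$.

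For the type II part, I would specialize the function $f$ in (\ref{ff1}) to the constant-row-sum situation. Since $r_1=r_2=\cdots=r_k$, every difference $r_1-r_i$ vanishes, so
\[
f(x)=\frac{k}{x}+1-r_1-x.
\]
The type II condition requires $r_1-r_k<1$ (automatic here, since $r_1-r_k=0$) together with $f(1+r_k-r_1)=f(1)\geq 0$. But Lemma \ref{lf1} says precisely that $f(1)\leq 0$, with equality if and only if $B=\mathbf{J}_k$. Hence if $B$ has constant row sums and $B\neq\mathbf{J}_k$, then $f(1)<0$, so the type II inequality fails strictly. Combining this with the type I observation above, $B$ is neither well-behaved of type I nor of type II, i.e. $B$ is not well-behaved.

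There is really no obstacle here: the lemma is an immediate corollary of Lemma \ref{lf1} together with the trivial remark that constant row sums force $r_1-r_k=0$. The only thing to be careful about is that in the constant-row-sum case $1+r_k-r_1=1$, so the two evaluation points coincide and the sharp case in Lemma \ref{lf1} ($B=\mathbf{J}_k$) is exactly the excluded case in the present statement; I would make that identification explicit so the "if and only if" in Lemma \ref{lf1} is used in the correct direction. A clean one-paragraph write-up suffices: note $B$ with constant row sums is not well-behaved of type I since $r_1-r_k=0<1$; then observe that being well-behaved of type II would require $f(1)\geq 0$, contradicting Lemma \ref{lf1} unless $B=\mathbf{J}_k$; conclude $B$ is not well-behaved.
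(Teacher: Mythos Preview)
Your proposal is correct and follows essentially the same approach as the paper: the paper simply notes that a matrix with constant row sums is not well-behaved of type I (since $r_1-r_k=0<1$) and then invokes Lemma \ref{lf1} to rule out type II via $f(1+r_k-r_1)=f(1)<0$ when $B\neq\mathbf{J}_k$. Your write-up spells out these steps in slightly more detail, but the argument is identical.
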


The next result will be a consequence of Theorems \ref{t2} and \ref{t6} given
later. However, we give here a direct proof of it.

\begin{proposition}
\label{lconswell}If $B\in\mathcal{PC}_{k}$ is consistent then $B$ is well-behaved.
\end{proposition}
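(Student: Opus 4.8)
The plan is to verify directly that a consistent $B\in\mathcal{PC}_k$ satisfies one of the two defining inequalities for well-behavedness. The natural first step is to reduce to a convenient normalization: by Lemma \ref{ldiagon}, a positive diagonal similarity preserves consistency and permutes nothing about the row-sum structure in a way that matters — but more to the point, the row sums themselves change. So instead I would keep $B$ as given, write $a_{ij}=\frac{w_i}{w_j}$ for positive $w_1,\dots,w_k$, and order the weights so that $w_1\ge w_2\ge\cdots\ge w_k$; then the $i$th row sum is $r_i=w_i\sum_{j=1}^k \frac{1}{w_j}$, so the row sums are already in nonincreasing order, with $r_1$ corresponding to the largest weight and $r_k$ to the smallest. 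Set $S=\sum_j \frac{1}{w_j}$, so $r_i = w_i S$. This gives the clean identities $r_1-r_i = (w_1-w_i)S$ and, crucially, $r_i = w_i S$, which I expect to be the engine of the whole computation.

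Next I would split into the two cases of the definition. If $r_1-r_k\ge 1$, then $B$ is well-behaved of type I and there is nothing to prove. So assume $r_1-r_k<1$; I must show $f(1+r_k-r_1)\ge 0$, where by the displayed formula in the definition
\[
f(1+r_k-r_1)=\frac{1}{1+r_k-r_1}+\sum_{i=2}^{k-1}\frac{1}{1+r_k-r_i}+1-r_k.
\]
Here I would substitute $r_i = w_iS$ and $r_k = w_kS$, so the generic term is $\frac{1}{1+(w_k-w_i)S}$, and since $w_i\ge w_k$ each denominator lies in $(0,1]$, making every term $\ge 1$. In particular the first $k-1$ terms sum to at least $k-1$, and I would also want a lower bound on the last term $1-r_k = 1-w_kS$. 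Note $w_kS = w_k\sum_j \frac{1}{w_j} \le k$ since $w_k\le w_j$ for all $j$, with equality iff all $w_j$ are equal; but that only gives $1-r_k\ge 1-k$, which alone is not enough. The real point must be a sharper interplay: the terms $\frac{1}{1+(w_k-w_i)S}$ are large precisely when the $w_i$ are spread out, which is also when $r_k=w_kS$ is small.

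I expect the main obstacle to be making that trade-off quantitative. One clean route: for each $i$, $\frac{1}{1+(w_k-w_i)S} \ge 2 - \bigl(1+(w_k-w_i)S\bigr) = 1-(w_k-w_i)S$ by the elementary inequality $\frac{1}{t}\ge 2-t$ for $t>0$ (equivalently $(t-1)^2\ge 0$), valid here since $0<1+(w_k-w_i)S\le 1$. Summing over $i=1,\dots,k-1$ (including $i=1$) gives
\[
\frac{1}{1+r_k-r_1}+\sum_{i=2}^{k-1}\frac{1}{1+r_k-r_i}\ \ge\ (k-1) - \sum_{i=1}^{k-1}(w_k-w_i)S = (k-1) - S\Bigl(\,(k-1)w_k - \sum_{i=1}^{k-1} w_i\,\Bigr).
\]
Then adding $1-r_k = 1-w_kS$ and using $\sum_{i=1}^{k} w_i \cdot \frac{1}{w_i}$-type bookkeeping — specifically $S\sum_{i=1}^{k-1}w_i \ge k-1$ is false in general, so instead I would write everything in terms of $S$ and $\sum_i w_i$ and show the resulting expression is $\ge 0$, falling back on Lemma \ref{lrowcons} (the smallest row sum of a consistent matrix is $r_k\le k$) and on the fact that $S\cdot w_i\ge 1$ term by term. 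If this elementary bounding does not close the gap tightly enough, the fallback is to invoke the forthcoming Theorems \ref{t2} and \ref{t6} as the paper itself notes, but I would first try to push the direct estimate through, treating the extreme consistent case $B=\mathbf{J}_k$ (where $f(1)=0$ by Lemma \ref{lf1}) as the boundary of the inequality and checking the derivative of the relevant one-parameter family of consistent matrices stays on the correct side.
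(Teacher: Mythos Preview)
You actually have the paper's proof in your hands and then discard it by an arithmetic slip. You observe that each denominator $1+r_k-r_i$ lies in $(0,1]$, so each of the $k-1$ reciprocal terms is $\ge 1$ and their sum is $\ge k-1$; and you observe (correctly, and this is precisely Lemma~\ref{lrowcons}) that $r_k=w_kS\le k$, so $1-r_k\ge 1-k$. You then say ``that only gives $1-r_k\ge 1-k$, which alone is not enough.'' But it \emph{is} enough: adding the two bounds gives
\[
f(1+r_k-r_1)\ \ge\ (k-1)+(1-r_k)\ =\ k-r_k\ \ge\ 0.
\]
This is exactly the paper's one-line argument. The consistency of $B$ enters only through Lemma~\ref{lrowcons} to get $r_k\le k$; no parametrization by $w_i$ or $S$ is needed.

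Your detour through $\tfrac{1}{t}\ge 2-t$ is a genuine strengthening of the termwise bound, but it leads to an expression $k+S\bigl(\sum_{i=1}^{k-1}w_i-kw_k\bigr)$ whose sign you cannot control without further work, and you correctly sense this. Drop it; the weaker bound $\tfrac{1}{t}\ge 1$ for $t\in(0,1]$ already closes.
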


\begin{proof}
It is enough to see that, if $r_{1}-r_{k}<1$ then $f(1+r_{k}-r_{1})\geq0,$ in
which $f$ is as in (\ref{ff1}). If $r_{1}-r_{k}<1,$ we have%
\[
f(1+r_{k}-r_{1})\geq\frac{k}{(r_{1}-r_{k})+(1+r_{k}-r_{1})}-r_{k}=k-r_{k}%
\geq0,
\]
where the last inequality follows from Lemma \ref{lrowcons}.
\end{proof}

\bigskip

There are consistent matrices well-behaved of type I and of type II.

\begin{example}
The consistent matrix $\boldsymbol{J}_{k}$ is well-behaved of type II while
the consistent matrix%
\[
\left[
\begin{array}
[c]{cc}%
1 & 5\mathbf{e}_{k-1}^{T}\\
\frac{1}{5}\mathbf{e}_{k-1} & \boldsymbol{J}_{k-1}%
\end{array}
\right]
\]
is well-behaved of type I.
\end{example}

\section{Extending a reciprocal matrix to one with a prescribed Perron
vector\label{sext}}

\begin{theorem}
\label{t1}For any $B\in\mathcal{PC}_{n-1}$ and $w\in\mathcal{V}_{n},$ there is
$A\in\mathcal{PC}_{n}$ with Perron vector $w$ and such that $A(n)=B.$
Moreover, $A$ is unique.
\end{theorem}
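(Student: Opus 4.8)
The plan is to realize $A$ by bordering $B$, to use the eigenvalue equation to force the new row and column, and then to reduce the resulting scalar condition to Lemma~\ref{lauxder}. First I would normalize: set $D=\operatorname*{diag}(w)^{-1}$, so that $Dw=\mathbf{e}_{n}$. By Lemma~\ref{ldiagon}, $A\mapsto DAD^{-1}$ is a bijection carrying the matrices $A\in\mathcal{PC}_{n}$ with Perron vector $w$ and $A(n)=B$ onto those with Perron vector $\mathbf{e}_{n}$ and $(DAD^{-1})(n)=D(n)BD(n)^{-1}$; since $B\mapsto D(n)BD(n)^{-1}$ is a bijection of $\mathcal{PC}_{n-1}$, it suffices to prove the statement for $w=\mathbf{e}_{n}$. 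So I assume $w=\mathbf{e}_{n}$, and let $r_{1}\geq\cdots\geq r_{n-1}$ be the row sums of $B$; the task is to exhibit a unique reciprocal $A$ with $A(n)=B$ and constant row sums.

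Any $A\in\mathcal{PC}_{n}$ with $A(n)=B$ has the form
\[
A=\left[
\begin{array}{cc}
B & c\\
c^{\prime} & 1
\end{array}
\right],\qquad c=\left[
\begin{array}{ccc}
c_{1} & \cdots & c_{n-1}
\end{array}
\right]^{T}>0,\quad c^{\prime}=\left[
\begin{array}{ccc}
\tfrac{1}{c_{1}} & \cdots & \tfrac{1}{c_{n-1}}
\end{array}
\right].
\]
Writing $\lambda$ for the common row sum (which will be the Perron eigenvalue of $A$), the rows $1,\ldots,n-1$ of $A\mathbf{e}_{n}=\lambda\mathbf{e}_{n}$ read $r_{i}+c_{i}=\lambda$ and the last row reads $\sum_{i=1}^{n-1}\tfrac{1}{c_{i}}+1=\lambda$. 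The first $n-1$ equations force $c_{i}=\lambda-r_{i}$, which is positive for every $i$ exactly when $\lambda>r_{1}$; substituting into the last equation and setting $x=\lambda-r_{1}>0$ turns it into $f(x)=0$, where
\[
f(x)=\frac{1}{x}+\frac{1}{r_{1}-r_{2}+x}+\cdots+\frac{1}{r_{1}-r_{n-1}+x}+1-r_{1}-x
\]
is the function of Lemma~\ref{lauxder} (with $k=n-1$, $a_{1}=r_{1}$, $a_{i}=r_{1}-r_{i}$).

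By Lemma~\ref{lauxder}, $f$ is strictly decreasing and continuous with range $\mathbb{R}$, so there is exactly one $x>0$ with $f(x)=0$; this yields a unique admissible $\lambda=x+r_{1}>r_{1}$, hence a unique positive vector $c$, hence a unique candidate $A$. That this $A$ works is immediate: it is positive and reciprocal, $A(n)=B$ by construction, and $A\mathbf{e}_{n}=\lambda\mathbf{e}_{n}$, so $\mathbf{e}_{n}$ is a positive eigenvector of a positive matrix and is therefore its Perron vector (Section~\ref{srec}). Conversely, any $A\in\mathcal{PC}_{n}$ with Perron vector $\mathbf{e}_{n}$ and $A(n)=B$ satisfies precisely the constraints above, so it equals this $A$; undoing the diagonal similarity gives the general statement. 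The one point requiring care is positivity of the new border: one must keep $c_{i}=\lambda-r_{i}>0$ for all $i$, which is exactly why the relevant root must be sought with $\lambda>r_{1}$ (equivalently $x>0$), the range on which Lemma~\ref{lauxder} applies cleanly.
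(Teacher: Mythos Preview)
Your proof is correct and follows essentially the same route as the paper: reduce to $w=\mathbf{e}_{n}$ via diagonal similarity, force the new column from the constant-row-sum condition, and invoke Lemma~\ref{lauxder} for the unique positive root of the resulting scalar equation. Your write-up is in fact a bit more explicit than the paper's about why the border is positive and why $\mathbf{e}_{n}$ is indeed the Perron vector, but the argument is the same.
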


\begin{proof}
Taking into account Lemmas \ref{ldiagon} and \ref{ldiag}, we assume
$w=\mathbf{e}_{n}.$ Moreover, still by Lemma \ref{ldiagon}, we assume
$r_{1}\geq\cdots\geq r_{n-1},$ in which $r_{i},$ $i=1,\ldots,n-1,$ is the sum
of the entries in the $i$th row of $B.$

Since we want $A$ with constant row sums and such that $A(n)=B$, the last
column of $A$ should be of the form
\begin{equation}
\left[
\begin{array}
[c]{ccccc}%
x & r_{1}-r_{2}+x & \cdots & r_{1}-r_{n-1}+x & 1
\end{array}
\right]  ^{T}, \label{col}%
\end{equation}
for some $x>0$. Since $A$ is reciprocal and the sum of the entries in the last
row of $A$ should be $r_{1}+x,$ we also have%
\begin{equation}
\frac{1}{x}+\frac{1}{r_{1}-r_{2}+x}+\cdots+\frac{1}{r_{1}-r_{n-1}+x}%
+1=r_{1}+x. \label{eq1}%
\end{equation}
By Lemma \ref{lauxder}, the existence and uniqueness of $x>0$ satisfying
(\ref{eq1}) follows. For such $x$, $A$ is as desired.
\end{proof}

\bigskip

By iterating the application of Theorem \ref{t1}, we get the following.

\begin{corollary}
Let $B\in\mathcal{PC}_{k},$ $k<n,$ and $w^{(k+i)}\in\mathcal{V}_{k+i},$
$i=1,\ldots,n-k.$ Then, there is $A\in\mathcal{PC}_{n}$ such that
$A[\{1,\ldots,k\}]=B$ and $A[\{1,\ldots,k+i\}]$ has Perron vector $w^{(k+i)},$
$i=1,\ldots,n-k.$ Moreover, $A$ is unique.
\end{corollary}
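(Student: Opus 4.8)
The plan is to obtain this corollary by a straightforward induction on $i$, using Theorem \ref{t1} as the inductive step. First I would set up the induction: for $i = 1$, we have a matrix $B \in \mathcal{PC}_k$ and a prescribed Perron vector $w^{(k+1)} \in \mathcal{V}_{k+1}$; applying Theorem \ref{t1} with "$n-1$" taken to be $k$ and "$w$" taken to be $w^{(k+1)}$ yields a unique $A^{(k+1)} \in \mathcal{PC}_{k+1}$ with $A^{(k+1)}(k+1) = B$ (equivalently $A^{(k+1)}[\{1,\dots,k\}] = B$) and Perron vector $w^{(k+1)}$.

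For the inductive step, suppose we have already constructed a unique $A^{(k+i)} \in \mathcal{PC}_{k+i}$ with $A^{(k+i)}[\{1,\dots,k\}] = B$ and with $A^{(k+i)}[\{1,\dots,k+j\}]$ having Perron vector $w^{(k+j)}$ for each $j = 1, \dots, i$. Now apply Theorem \ref{t1} again, this time with the role of "$B$" played by $A^{(k+i)} \in \mathcal{PC}_{k+i}$ and the role of "$w$" played by $w^{(k+i+1)} \in \mathcal{V}_{k+i+1}$: this produces a unique $A^{(k+i+1)} \in \mathcal{PC}_{k+i+1}$ with $A^{(k+i+1)}(k+i+1) = A^{(k+i)}$ and Perron vector $w^{(k+i+1)}$. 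Since $A^{(k+i+1)}[\{1,\dots,k+i\}] = A^{(k+i)}$, all the previously imposed conditions on principal submatrices of sizes $k, k+1, \dots, k+i$ are automatically inherited, and the new condition for size $k+i+1$ holds by construction. Taking $A = A^{(n)}$ after $n-k$ steps gives existence.

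For uniqueness, I would argue in reverse: if $A \in \mathcal{PC}_n$ satisfies all the stated conditions, then in particular $A[\{1,\dots,n-1\}]$ is a matrix in $\mathcal{PC}_{n-1}$ with the property that $A$ has Perron vector $w^{(n)}$ and $A(n) = A[\{1,\dots,n-1\}]$; but $A[\{1,\dots,n-1\}]$ itself satisfies the corollary's hypotheses for the parameter $n-1$ in place of $n$, so by downward induction it is uniquely determined by $B$ and $w^{(k+1)}, \dots, w^{(n-1)}$, and then the uniqueness clause of Theorem \ref{t1} forces $A$ itself to be unique. I do not anticipate any real obstacle here; the only point requiring a line of care is the bookkeeping observation that passing to the principal submatrix $[\{1,\dots,k+i\}]$ of $A^{(k+i+1)}$ recovers exactly $A^{(k+i)}$, so that the Perron-vector conditions at all smaller sizes are preserved along the induction rather than needing to be re-verified.
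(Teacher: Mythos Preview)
Your proposal is correct and is exactly the approach the paper takes: the paper simply states that the corollary follows ``by iterating the application of Theorem \ref{t1}'' without spelling out the induction, and you have filled in precisely those details. The existence argument by forward induction and the uniqueness argument by downward induction are both sound, and the bookkeeping observation you flag is indeed the only (trivial) point to note.
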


We also have the following consequence of Theorem \ref{t1}.

\begin{corollary}
\label{t3}Let $A_{1},A_{2}\in\mathcal{PC}_{n}$ with $A_{1}(n)=A_{2}(n).$ If
$w^{(1)}$ and $w^{(2)}$ are the Perron vectors of $A_{1}$ and $A_{2},$
respectively, and $w^{(1)}(n)$ and $w^{(2)}(n)$ are proportional$,$ then
$A_{2}=DA_{1}D^{-1}$ for some $D=I_{n-1}\oplus\lbrack c],$ with $c>0.$
\end{corollary}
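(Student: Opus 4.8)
The plan is to reduce Corollary \ref{t3} to an application of Theorem \ref{t1}. The key observation is that $w^{(1)}(n)$ and $w^{(2)}(n)$ being proportional means, after scaling, we can make the two Perron vectors agree on the first $n-1$ coordinates, and then uniqueness in Theorem \ref{t1} will force the two (suitably conjugated) matrices to coincide.

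First I would normalize: since the Perron vector of any matrix in $\mathcal{PC}_n$ is determined only up to a positive scalar, replace $w^{(1)}$ and $w^{(2)}$ by positive scalar multiples so that their first $n-1$ entries agree exactly, i.e. $w^{(1)}(n) = w^{(2)}(n) =: u$. Write $w^{(1)} = \left[\begin{array}{c} u \\ s_1 \end{array}\right]$ and $w^{(2)} = \left[\begin{array}{c} u \\ s_2 \end{array}\right]$ for some $s_1, s_2 > 0$, and set $c = s_2 / s_1 > 0$, $D = I_{n-1} \oplus [c]$. Then $D w^{(1)} = \left[\begin{array}{c} u \\ s_2 \end{array}\right] = w^{(2)}$.

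Next I would apply Lemma \ref{ldiagon} to $A_1$ with the positive diagonal matrix $D$: the matrix $DA_1D^{-1}$ lies in $\mathcal{PC}_n$, has Perron vector $Dw^{(1)} = w^{(2)}$, and satisfies $(DA_1D^{-1})(n) = D(n)A_1(n)D^{-1}(n) = A_1(n)$, since $D(n) = I_{n-1}$. But $A_1(n) = A_2(n)$ by hypothesis, so $DA_1D^{-1}$ is a matrix in $\mathcal{PC}_n$ with principal submatrix $A_2(n)$ obtained by deleting row and column $n$, and with Perron vector $w^{(2)}$. On the other hand, $A_2$ itself is a matrix in $\mathcal{PC}_n$ with the same principal submatrix $A_2(n)$ and the same Perron vector $w^{(2)}$. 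By the uniqueness assertion of Theorem \ref{t1} (applied with $B = A_2(n)$ and $w = w^{(2)}$), we conclude $DA_1D^{-1} = A_2$, as desired.

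I do not expect any real obstacle here — the statement is essentially a uniqueness corollary, and the only thing to be careful about is the bookkeeping of the normalization (making sure the scalar multiples are chosen so that the first $n-1$ coordinates literally coincide, which is always possible since all entries are positive) and the observation that $D$ has the required block form $I_{n-1} \oplus [c]$ so that conjugation by $D$ leaves the principal submatrix $A_1(n)$ untouched. The one genuine ingredient being invoked is the uniqueness half of Theorem \ref{t1}.
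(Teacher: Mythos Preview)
Your proof is correct and follows essentially the same route as the paper: construct the diagonal $D$ that carries $w^{(1)}$ to $w^{(2)}$, observe from the proportionality hypothesis that $D$ has the block form $I_{n-1}\oplus[c]$ (so conjugation leaves $A_1(n)$ fixed), and then invoke the uniqueness half of Theorem~\ref{t1}. If anything your version is slightly more direct, since you build $D$ explicitly rather than appealing to Lemma~\ref{ldiag}.
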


\begin{proof}
Using Lemma \ref{ldiag}, we can conclude that there is a (unique) positive
diagonal matrix $D$ such that the Perron vector $Dw^{(1)}$ of $DA_{1}D^{-1}$
is a multiple of $w^{(2)}.$ Moreover, since $w^{(1)}(n)$ and $w^{(2)}(n)$ are
multiples, the first $n-1$ entries of $D$ are constant. Thus, $DA_{1}D^{-1}$
satisfies $(DA_{1}D^{-1})(n)=A_{2}(n)$ and has Perron vector $w^{(2)}.$ By the
uniqueness statement in Theorem \ref{t1}, we have $DA_{1}D^{-1}=A_{2},$
proving the claim.
\end{proof}

\section{Matrices with inefficient Perron vector\label{s5}}

In this section we construct matrices $A\in\mathcal{PC}_{n}$ with inefficient
Perron vector and a prescribed principal submatrix. We start by studying the
possibility of extending a matrix in $\mathcal{PC}_{n-1}$ to one in
$\mathcal{PC}_{n}$ with inefficient Perron vector $\mathbf{e}_{n}.$ Based on
the result obtained, we then extend any reciprocal matrix to one of arbitrary
larger size $n$ and with inefficient Perron vector, except if the fixed matrix
is consistent of size $n-1,$ in which case we show that such an extension does
not exist.

\subsection{Extending a not well-behaved reciprocal matrix to one with
inefficient Perron vector $\mathbf{e}_{n}$\label{secextin}}

Let $A\in\mathcal{PC}_{n}$ and suppose that $\mathbf{e}_{n}$ is the Perron
vector of $A$. We next give sufficient conditions for the Perron vector of $A$
to be inefficient. Moreover, if $\mathbf{e}_{n-1}$ is efficient for $A(n),$
these conditions are also necessary.

\begin{theorem}
\label{t2}Let $A\in\mathcal{PC}_{n}$ and suppose that $\mathbf{e}_{n}$ is the
Perron vector of $A$. If $A(n)$ is not well-behaved, then the Perron vector of
$A$ is inefficient for $A$ and vertex $n$ is a sink vertex of $G(A,\mathbf{e}%
_{n})$. As for a converse, if $\mathbf{e}_{n-1}$ is efficient for $A(n)$ and
the Perron vector of $A$ is inefficient for $A,$ then $A(n)$ is not well-behaved.
\end{theorem}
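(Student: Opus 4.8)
The plan is to work throughout with the digraph characterization of efficiency (Lemma~\ref{blanq}) and with the explicit description of the last column of $A$ coming from the proof of Theorem~\ref{t1}. Since $\mathbf{e}_{n}$ is the Perron vector of $A$, the matrix $A$ has constant row sums, and by Lemma~\ref{ldiagon} we may apply a permutation similarity fixing the index $n$ so that the row sums $r_{1}\geq\cdots\geq r_{n-1}$ of $B:=A(n)$ are in nonincreasing order (this is harmless by Lemma~\ref{lsim} and the remark that $G(DAD^{-1},Dw)=G(A,w)$, and the well-behavedness notion is permutation-invariant). By the uniqueness in Theorem~\ref{t1}, the last column of $A$ is forced to be $\left[\,x,\ r_{1}-r_{2}+x,\ \ldots,\ r_{1}-r_{n-1}+x,\ 1\,\right]^{T}$ for the unique $x>0$ solving \eqref{eq1}, i.e. the unique positive root of the function $f$ in \eqref{ff1}. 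Note $a_{in}=r_{1}-r_{i}+x\geq x$ for all $i<n$, and the $(n,i)$ entries are the reciprocals.

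For the forward direction, assume $B$ is not well-behaved; then $r_{1}-r_{n-1}<1$ and $f(1+r_{n-1}-r_{1})<0$. Since $f$ is strictly decreasing (Lemma~\ref{lauxder}) and $f(x)=0$, this gives $x<1+r_{n-1}-r_{1}$, hence $a_{in}=r_{1}-r_{i}+x<1$ for every $i<n$ — indeed $a_{in}\leq r_{1}-r_{n-1}+x<1$. Consequently, for each $i<n$, the ratio $\dfrac{(\mathbf{e}_{n})_{n}}{(\mathbf{e}_{n})_{i}}=1>a_{in}$, so the edge $n\to i$ is absent from $G(A,\mathbf{e}_{n})$ while $i\to n$ is present. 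Thus vertex $n$ has outdegree $0$: it is a sink, $G(A,\mathbf{e}_{n})$ is not strongly connected, and by Lemma~\ref{blanq} the Perron vector $\mathbf{e}_{n}$ is inefficient for $A$. This handles both assertions of the first sentence.

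For the converse, assume $\mathbf{e}_{n-1}$ is efficient for $B=A(n)$ and $\mathbf{e}_{n}$ is inefficient for $A$; we must show $B$ is not well-behaved. Since $G(A(n),\mathbf{e}_{n-1})=G(B,\mathbf{e}_{n-1})$ is strongly connected but $G(A,\mathbf{e}_{n})$ is not, the discussion before Theorem~\ref{thind} shows vertex $n$ must be either a sink or a source of $G(A,\mathbf{e}_{n})$. I will rule out the source case: if $n$ were a source, then $n\to i$ would be present for all $i<n$, i.e. $1\geq a_{in}=r_{1}-r_{i}+x$ for all $i$, in particular $1\geq r_{1}-r_{1}+x=x$, so $x\leq 1$; moreover taking $i$ with $r_{i}=r_{1}$ gives nothing sharper, so I instead use the row-sum balance: the $n$th row sum equals $r_{1}+x$ while it is also $\frac1x+\sum_{i}\frac{1}{r_1-r_i+x}+1$, and if all $a_{in}\le 1$ then each $\frac1{a_{in}}\ge 1$ forces the $n$th row sum $\ge n$; combined with $x\le 1$ and Lemma~\ref{lf1}/Remark~\ref{remlem} one concludes $B=\mathbf{J}_{n-1}$, which is well-behaved and has $\mathbf{e}_{n-1}$ efficient, so $A$ is obtained from a consistent matrix by one bordering and $\mathbf{e}_n$ is efficient (contradiction) — hence the source case cannot occur when $\mathbf{e}_n$ is inefficient. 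Therefore $n$ is a sink: $a_{in}<1$, equivalently $r_{1}-r_{i}+x<1$, for all $i<n$; taking $i=n-1$ gives $r_{1}-r_{n-1}+x<1$, so in particular $r_{1}-r_{n-1}<1$ and $x<1+r_{n-1}-r_{1}$, whence by strict monotonicity $f(1+r_{n-1}-r_{1})<f(x)=0$. That is exactly the statement that $B$ is not well-behaved. The main obstacle is the clean elimination of the source case; I expect it to reduce, via the row-sum identity for row $n$ together with Lemma~\ref{ll1} and Lemma~\ref{lf1}, to the degenerate consistent case $B=\mathbf{J}_{n-1}$, so that the "source" possibility collapses precisely when the Perron vector is genuinely inefficient.
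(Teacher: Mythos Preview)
Your forward direction is correct and matches the paper's argument exactly.

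The converse, however, contains a genuine error in handling the ``source'' case. You write that if $n$ were a source then ``$n\to i$ would be present for all $i<n$, i.e.\ $1\geq a_{in}$''. But by the definition of the digraph, the edge $n\to i$ exists iff $1=\dfrac{(\mathbf{e}_n)_n}{(\mathbf{e}_n)_i}\geq a_{ni}$, i.e.\ iff $a_{in}\geq 1$, not $a_{in}\leq 1$. More to the point, the source property (indegree $0$) says precisely that no edge $i\to n$ exists, which means $1<a_{in}$ for every $i<n$. Your subsequent attempt to force $B=\mathbf J_{n-1}$ is therefore built on the wrong inequality, and in any case you yourself flag it as incomplete (``I expect it to reduce \ldots''); the chain of implications you sketch using Lemma~\ref{ll1}, Lemma~\ref{lf1} and Remark~\ref{remlem} does not actually pin down $B=\mathbf J_{n-1}$.

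The paper's argument for this step is much shorter and avoids the case split altogether. Once you know the off-diagonal entries of the last column are either all~$>1$ or all~$<1$ (sink/source dichotomy), simply invoke Lemma~\ref{lf1}: since $f(x)=0$ with $f$ strictly decreasing and $f(1)\leq 0$, one has $x\leq 1$. But $a_{1n}=x$, so the possibility ``all $a_{in}>1$'' (the source case) is immediately excluded. Hence the sink case holds, and your final paragraph then finishes the proof correctly. Replace your convoluted source-elimination paragraph with this one-line appeal to Lemma~\ref{lf1} and the converse is complete.
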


\begin{proof}
Let $r_{i}$ be the $i$th row sum of $A(n),$ $i=1,\ldots,n-1.$ Taking into
account Lemmas \ref{ldiagon} and \ref{lsim}, we may assume $r_{1}\geq
\cdots\geq r_{n-1}.$ Using arguments similar to those in the proof of Theorem
\ref{t1}, the last column of $A$ with Perron vector $\mathbf{e}_{n}$ is as in
(\ref{col}), with $x>0$ satisfying (\ref{eq1}).

If $A(n)$ is not well-behaved then%
\[
r_{1}-r_{n-1}<1\text{ and }f(1+r_{n-1}-r_{1})<0,
\]
implying, taking into account Lemma \ref{lauxder},
\begin{equation}
x<1+r_{n-1}-r_{1}\leq1. \label{f3}%
\end{equation}
Since (\ref{f3}) implies $r_{1}-r_{n-1}+x<1,$ and, on the other hand,
\[
x\leq r_{1}-r_{2}+x\leq\cdots\leq r_{1}-r_{n-1}+x,
\]
we have that all the off-diagonal entries of the last column of $A$ are
smaller than $1,$ implying that $G(A,\mathbf{e}_{n})$ is not strongly
connected and, thus, by Lemma \ref{blanq}, $\mathbf{e}_{n}$ is inefficient for
$A.$ Note that vertex $n$ is a sink. This proves the first claim.

As for the second claim, if $\mathbf{e}_{n-1}$ is efficient for $A(n),$ then,
by Lemma \ref{blanq}, $G(A(n),\mathbf{e}_{n-1})$ is strongly connected. If
$\mathbf{e}_{n}$ is inefficient for $A,$ then $G(A,\mathbf{e}_{n})$ is not
strongly connected. Thus, the off-diagonal entries of the last column of $A$
are all greater than $1$ or all less than $1.$ Since (\ref{eq1}) holds, by
Lemma \ref{lf1}, $x\leq1.$ Hence, the off-diagonal entries of the last column
of $A$ are all less than $1.$ Then, $r_{1}-r_{n-1}+x<1,$ that is,
$r_{1}-r_{n-1}<1$ and $x<1+r_{n-1}-r_{1}.$ By Lemma \ref{lauxder}, for $f$ as
in (\ref{ff1}) with $k=n-1,$ we have $f(1+r_{n-1}-r_{1})<0,$ as $f(x)=0.$ This
implies that $A(n)$ is not well-behaved, proving the result.
\end{proof}

\bigskip Note that, by Theorem \ref{thind}, if $\mathbf{e}_{n}$ is inefficient
for $A,$ then $\mathbf{e}_{n-1}$ is inefficient for $A(i),$ for at least $n-1$
distinct $i$'s$.$

\bigskip

By Theorem \ref{t1}, any matrix $B\in\mathcal{PC}_{n-1}$ can be extended to a
matrix in $\mathcal{PC}_{n}$ with Perron vector $\mathbf{e}_{n}.$ Following
the idea in the proof of that result, in the next example we construct a
matrix $A\in\mathcal{PC}_{4}$ with constant row sums and a prescribed, not
well-behaved, principal submatrix in $\mathcal{PC}_{3}.$ According to Theorem
\ref{t2}, the Perron vector of $A$, $\mathbf{e}_{4}$, is inefficient for $A.$

\begin{example}
\label{exx0}We determine $A\in\mathcal{PC}_{4}$ with Perron vector
$\mathbf{e}_{4}$ (that is, with constant row sums) and such that
\begin{equation}
A(4)=\left[
\begin{array}
[c]{ccc}%
1 & \frac{1}{5} & \smallskip\frac{51}{10}\\
5 & 1 & \smallskip\frac{2}{9}\\
\frac{10}{51} & \frac{9}{2} & 1
\end{array}
\right]  . \label{A4}%
\end{equation}
Note that $A(4)$ is not well-behaved, as observed in Example \ref{ex1}, and
has nonincreasing row sums. The matrix $A$ should be of the form
\[
A=\left[
\begin{array}
[c]{cccc}%
1 & \frac{1}{5} & \smallskip\frac{51}{10} & x\\
5 & 1 & \smallskip\frac{2}{9} & x+\frac{1}{5}+\frac{51}{10}-5-\frac{2}{9}\\
\frac{10}{51} & \frac{9}{2} & 1 & \smallskip x+\frac{1}{5}+\frac{51}{10}%
-\frac{9}{2}-\frac{10}{51}\\
\frac{1}{x} & \smallskip\frac{1}{x+\frac{1}{5}+\frac{51}{10}-5-\frac{2}{9}} &
\frac{1}{x+\frac{1}{5}+\frac{51}{10}-\frac{9}{2}-\frac{10}{51}} & 1
\end{array}
\right]  ,
\]
with $x$ satisfying%
\[
\frac{1}{x}+\frac{1}{x+\frac{1}{5}+\frac{51}{10}-5-\frac{2}{9}}+\frac
{1}{x+\frac{1}{5}+\frac{51}{10}-\frac{9}{2}-\frac{10}{51}}+1=1+\frac{1}%
{5}+\frac{51}{10}+x.
\]
A calculation gives $x=0.39137,$ implying that
\[
\allowbreak A=\allowbreak\left[
\begin{array}
[c]{cccc}%
1 & \frac{1}{5} & \smallskip\frac{51}{10} & 0.39137\\
5 & 1 & \smallskip\frac{2}{9} & 0.46915\\
\frac{10}{51} & \frac{9}{2} & 1 & 0.99529\\
2.5551 & 2.1315 & 1.0047 & 1
\end{array}
\right]  \allowbreak.\allowbreak
\]
By Theorem \ref{t2}, $A$ has inefficient Perron vector $\mathbf{e}_{4}$ ($4$
is a sink vertex of $G(A,\mathbf{e}_{4})$).
\end{example}

\bigskip Theorem \ref{t2} leaves the question of characterizing the matrices
$A\in\mathcal{PC}_{n}$ with inefficient Perron vector $\mathbf{e}_{n}$ and
such that $A(i)$ is well-behaved for all $i\in\{1,\ldots,n\}.$ In this case,
by the theorem, $\mathbf{e}_{n-1}$ is inefficient for $A(i),$ $i=1,\ldots,n.$
Moreover, by Lemma \ref{lrsum}, $\mathbf{e}_{n-1}$ is not the Perron vector of
$A(i)$. Note that, since $\mathbf{e}_{n-1}$ is inefficient for $A(i),$ the
matrices $A(i)$ and $\mathbf{J}_{n-1}$ are distinct. Next we give an example
of such a matrix $A$. We show in Section \ref{sec4} that we should have $n>4.$

\begin{example}
\label{ex4}Let
\[
A=\left[
\begin{array}
[c]{cccccc}%
1 & 1.2783 & 0.2364 & 1.0245 & 2.0221 & 4.5197\\
0.7823 & 1 & 2.4655 & 1.6028 & 2.1091 & 2.1214\\
4.2304 & 0.4056 & 1 & 1.3002 & 1.7109 & 1.4340\\
0.9761 & 0.6239 & 0.7691 & 1 & 6.5795 & 0.1324\\
0.4945 & 0.4741 & 0.5845 & 0.1520 & 1 & 7.3759\\
0.2213 & 0.4714 & 0.6973 & 7.5555 & 0.1356 & 1
\end{array}
\right]  \in\mathcal{PC}_{6}.
\]
The matrix $A$ has Perron eigenvector $\mathbf{e}_{6},$ which is inefficient
for $A.$ It can be seen that, for any $i\in\{1,\ldots,6\},$ $\mathbf{e}_{5}$
is inefficient for $A(i)$ and $A(i)$ is well-behaved ($A(2)$ is well-behaved
of type II and $A(i)$ is well-behaved of type I for $i\neq2$). Observe that
the digraph $G(A,w)$ has no sink nor source vertex, as each row of $A$ has
entries greater than $1$ and entries less than $1.$
\end{example}

As a consequence of Theorem \ref{t2}, we have the following important result
that allows us to easily construct reciprocal matrices with inefficient Perron vector.

\begin{corollary}
\label{c4}Let $A\in\mathcal{PC}_{n}$ and $w\in\mathcal{V}_{n}$ be the Perron
vector of $A.$ Suppose that $A(n)$ is inconsistent. If $w(n)$ is the Perron
vector of $A(n)$ then $w$ is inefficient for $A$ and $n$ is a sink vertex of
$G(A,w)$.
\end{corollary}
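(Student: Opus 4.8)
The plan is to reduce the statement to the situation already handled by Theorem \ref{t2}, i.e.\ to the case where the Perron vector is $\mathbf{e}_{n}$ and the deleted principal submatrix fails to be well-behaved. First I would normalize via a positive diagonal similarity. Let $D\in M_{n}$ be the positive diagonal matrix with $D^{-1}=\operatorname*{diag}(w)$, and set $A^{\prime}=DAD^{-1}$. Then $Dw=\mathbf{e}_{n}$, and by Lemma \ref{ldiagon} the vector $\mathbf{e}_{n}$ is the Perron vector of $A^{\prime}$. Also $A^{\prime}(n)=D(n)A(n)D(n)^{-1}$ with $D(n)^{-1}=\operatorname*{diag}(w(n))$. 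Since $w(n)$ is, by hypothesis, the Perron vector of $A(n)$, Lemma \ref{ldiagon} gives that $(Dw)(n)=\mathbf{e}_{n-1}$ is the Perron vector of $A^{\prime}(n)$; in other words, $A^{\prime}(n)$ has constant row sums.

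Next I would observe that $A^{\prime}(n)$ is inconsistent. A positive diagonal similarity carries reciprocal consistent matrices to consistent ones, and its inverse is again such a similarity; hence inconsistency is preserved. Since $A(n)$ is inconsistent, so is $A^{\prime}(n)$, and in particular $A^{\prime}(n)\neq\mathbf{J}_{n-1}$. Now Lemma \ref{lrsum} applies: a matrix in $\mathcal{PC}_{n-1}$ with constant row sums that is different from $\mathbf{J}_{n-1}$ is not well-behaved. Therefore $A^{\prime}(n)$ is not well-behaved, and since $\mathbf{e}_{n}$ is the Perron vector of $A^{\prime}$, Theorem \ref{t2} yields that $\mathbf{e}_{n}$ is inefficient for $A^{\prime}$ and that vertex $n$ is a sink vertex of $G(A^{\prime},\mathbf{e}_{n})$.

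Finally I would transfer these conclusions back to $A$ and $w$. By Lemma \ref{lsim}, efficiency is invariant under monomial similarity, so $w$ is inefficient for $A$ because $\mathbf{e}_{n}=Dw$ is inefficient for $A^{\prime}=DAD^{-1}$. Moreover, for a positive diagonal $D$ the digraphs $G(DAD^{-1},Dw)$ and $G(A,w)$ coincide, so $G(A^{\prime},\mathbf{e}_{n})=G(A,w)$ and vertex $n$ is a sink vertex of $G(A,w)$ as well. I do not anticipate a genuine obstacle; the only point requiring care is tracking the positive diagonal similarity through the efficiency and digraph statements. The substantive content is the observation that the hypothesis ``$w(n)$ is the Perron vector of $A(n)$'' forces $A^{\prime}(n)$ to have constant row sums, which together with inconsistency and Lemma \ref{lrsum} puts us precisely under the hypotheses of Theorem \ref{t2}.
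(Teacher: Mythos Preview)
Your proposal is correct and follows essentially the same approach as the paper: reduce via positive diagonal similarity to the case $w=\mathbf{e}_{n}$, observe that $A(n)$ then has constant row sums and is inconsistent, invoke Lemma~\ref{lrsum} to conclude it is not well-behaved, and apply Theorem~\ref{t2}. The paper's proof is simply a terser version of yours, citing Lemmas~\ref{ldiagon}, \ref{ldiag}, \ref{lsim} and the invariance of $G(A,w)$ under diagonal similarity in one sentence to justify the reduction, whereas you spell out each step explicitly.
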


\begin{proof}
By Lemmas \ref{ldiagon}, \ref{ldiag} and \ref{lsim}, and since, for a positive
diagonal matrix $D\in M_{n},$ $G(DAD^{-1},Dw)$ and $G(A,w)$ coincide, we may
assume $w=\mathbf{e}_{n}.$ Since $A(n)$ is inconsistent with constant row
sums, by Lemma \ref{lrsum}, it is not well-behaved. Thus, the result follows
from Theorem \ref{t2}.
\end{proof}

\bigskip Observe that, if $w=\left[
\begin{array}
[c]{ccc}%
w_{1} & \cdots & w_{n}%
\end{array}
\right]  ^{T}$ is the Perron vector of $A$ and $A(n)$ is consistent with
Perron vector $w(n),$ then, by the uniqueness statement in Theorem \ref{t1},
$A=\left[  \frac{w_{i}}{w_{j}}\right]  $ ($A$ is consistent) and $w$ is
efficient for $A$ (as will follow from Theorem \ref{t6}). Thus, the assumption
in Corollary \ref{c4} that $A(n)$ is inconsistent is necessary.

\bigskip

In Corollary \ref{c4} we gave sufficient conditions for $G(A,w)$ to have a
sink vertex, in which $w$ is the Perron vector of $A.$ Another sufficient
condition is given next.

\begin{corollary}
\label{ceff}Let $A\in\mathcal{PC}_{n}$ and $w$ be the Perron vector of $A.$ If
$w$ is inefficient for $A$ and $w(n)$ is efficient for $A(n)$, then $n$ is a
sink vertex of $G(A,w)$.
\end{corollary}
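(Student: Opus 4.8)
The plan is to reduce to the normalized case $w=\mathbf{e}_n$ and then mimic the structural analysis already carried out in the second half of the proof of Theorem \ref{t2}. First I would invoke Lemmas \ref{ldiagon}, \ref{ldiag} and \ref{lsim} together with the observation (noted right after Lemma \ref{lsim}) that for a positive diagonal matrix $D$ the digraphs $G(DAD^{-1},Dw)$ and $G(A,w)$ coincide; this lets me assume without loss of generality that $w=\mathbf{e}_n$, so that $A$ has constant row sums and $\mathbf{e}_{n-1}$ is efficient for $A(n)$. Under this normalization the hypothesis that $w$ is inefficient for $A$ says, via Lemma \ref{blanq}, that $G(A,\mathbf{e}_n)$ is not strongly connected.

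Next I would use the fact recorded in Section \ref{sdig}: since $\mathbf{e}_{n-1}$ is efficient for $A(n)$, the subgraph $G(A(n),\mathbf{e}_{n-1})$ of $G(A,\mathbf{e}_n)$ induced by vertices $1,\ldots,n-1$ is strongly connected, while $G(A,\mathbf{e}_n)$ itself is not. Consequently vertex $n$ must be either a sink or a source of $G(A,\mathbf{e}_n)$; equivalently, the off-diagonal entries of the last column of $A$ are all less than $1$ (vertex $n$ a sink) or all greater than $1$ (vertex $n$ a source). It remains to rule out the source case.

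To exclude the source case I would appeal to Lemma \ref{lf1} exactly as in the converse half of the proof of Theorem \ref{t2}. Writing $r_1\ge\cdots\ge r_{n-1}$ for the row sums of $A(n)$ (permuted into nonincreasing order using Lemma \ref{ldiagon}), the last column of $A$ is forced to have the form \eqref{col} with $x>0$ satisfying \eqref{eq1}; but \eqref{eq1} is precisely $f(x)=0$ for $f$ as in \eqref{ff1} with $k=n-1$, so Lemma \ref{lf1} gives $x\le 1$. Since $x$ is the $(1,n)$ entry of $A$ and it is the smallest off-diagonal entry of the last column, the last column cannot consist entirely of entries greater than $1$. Hence vertex $n$ is a sink, as claimed.

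The only mildly delicate point — the ``main obstacle'' — is the bookkeeping needed to justify the permutation reduction: one must be careful that reordering the rows of $A(n)$ into nonincreasing row-sum order by a permutation similarity $P\oplus[1]$ simultaneously preserves (i) the property that $\mathbf{e}_n$ is the Perron vector, (ii) the efficiency of $\mathbf{e}_{n-1}$ for $A(n)$, and (iii) the status of vertex $n$ as a sink or source, so that the conclusion for the permuted matrix transfers back. All three are immediate from Lemmas \ref{ldiagon} and \ref{lsim} and the fact that a permutation similarity by $P\oplus[1]$ fixes index $n$, so no real work is required beyond citing them. Everything else is a verbatim reuse of the argument in Theorem \ref{t2}.
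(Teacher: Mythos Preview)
Your proof is correct. It differs from the paper's only in packaging: the paper's proof is two sentences that invoke Theorem~\ref{t2} twice as a black box---first the converse direction (inefficient $\mathbf{e}_n$ with $\mathbf{e}_{n-1}$ efficient for $A(n)$ forces $A(n)$ to be not well-behaved), then the forward direction (not well-behaved forces vertex $n$ to be a sink). You instead bypass the ``well-behaved'' notion entirely: you use the sink/source dichotomy from Section~\ref{sdig} directly and then invoke Lemma~\ref{lf1} to eliminate the source option, which is exactly the computation hidden inside the converse half of Theorem~\ref{t2}. Your route is slightly more self-contained and avoids the detour through the well-behaved definition; the paper's route is terser because Theorem~\ref{t2} has already done the work. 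Substantively they are the same argument.
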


\begin{proof}
By Lemmas \ref{ldiagon}, \ref{ldiag} and \ref{lsim}, we assume that
$w=\mathbf{e}_{n}.$ If $\mathbf{e}_{n}$ is inefficient for $A$ and
$\mathbf{e}_{n-1}$ is efficient for $A(n),$ by Theorem \ref{t2}, $A(n)$ is not
well-behaved, which implies, by the same theorem, that $n$ is a sink vertex of
$G(A,w).$
\end{proof}

\bigskip

From Corollary \ref{c4}, we can easily construct a reciprocal matrix with
inefficient Perron vector by adding a column with constant off-diagonal
entries (and the corresponding reciprocal row) to an arbitrary inconsistent
reciprocal matrix with constant row sums. More precisely, if $T\in
\mathcal{PC}_{n-1}$ has constant row sums, say equal to $r,$ and is
inconsistent, by Corollary \ref{c4}, any matrix $A$ of the form%
\begin{equation}
A=\left[
\begin{array}
[c]{cc}%
T & a\mathbf{e}_{n-1}\\
\frac{1}{a}\mathbf{e}_{n-1}^{T} & 1
\end{array}
\right]  , \label{part}%
\end{equation}
with $a>0,$ has inefficient Perron vector. Note that, for $D^{-1}%
=I_{n-1}\oplus\left[  x_{0}\right]  ,$ in which $x=ax_{0}$ satisfies
(\ref{xx}) (with $a_{1}=r$ and $k=n-1$), the matrix $DAD^{-1}$ has constant
row sums. Thus, the Perron vector of $A\ $is $\left[
\begin{array}
[c]{cc}%
\mathbf{e}_{n-1}^{T} & x_{0}%
\end{array}
\right]  ^{T}.$

According to Lemma \ref{lsim}, any matrix monomial similar to a matrix as in
(\ref{part}), with $T$ inconsistent with constant row sums, has inefficient
Perron vector.

\bigskip

We now focus on the construction of matrices $T$ as above. First, note that,
from Lemma \ref{ldiag}, given any $B\in\mathcal{PC}_{k}$ with Perron vector
$v,$ for $D^{-1}=\operatorname*{diag}(v),$ the matrix $T=DBD^{-1}$ has
constant row sums. Moreover, if $B$ is inconsistent, then so is $T.$

Next we present structured classes of inconsistent reciprocal matrices $T$
with constant row sums. One such class consists of matrices of the form
\begin{equation}
T=\left[
\begin{array}
[c]{ccccccc}%
1 & b & 1 & \cdots & \cdots & 1 & \frac{1}{b}\\
\frac{1}{b} & 1 & b & 1 & \cdots & 1 & 1\\
1 & \frac{1}{b} & 1 & b & \ddots &  & 1\\
\vdots & 1 & \frac{1}{b} & 1 & \ddots & \ddots & \\
\vdots & \vdots & \ddots & \ddots & \ddots & b & 1\\
1 & 1 &  & \ddots & \ddots & 1 & b\\
b & 1 & 1 &  &  & \frac{1}{b} & 1
\end{array}
\right]  \in\mathcal{PC}_{k}, \label{Boz}%
\end{equation}
in which $k\geq3$ and $b$ is an arbitrary positive number different from $1.$
The matrices of the form (\ref{part}) with $T$ as in (\ref{Boz}) $(b\neq1)$
form the class of reciprocal matrices presented in \cite{bozoki2014} for which
inefficiency of the Perron vector was noted. Thus, the class of
\cite{bozoki2014} is part of our much more general construction.

Another class of inconsistent reciprocal matrices with constant row sums
consists of the Toeplitz matrices $T\in\mathcal{PC}_{k},$ with $k\geq3$ odd$,$
whose first row is%
\[
\left[
\begin{array}
[c]{cccccccc}%
1 & b & \frac{1}{b} & b & \frac{1}{b} & \cdots & b & \frac{1}{b}%
\end{array}
\right]  ,
\]
with $b\neq1.$ For example, for $k=5,$
\[
T=\left[
\begin{array}
[c]{ccccc}%
1 & b & \frac{1}{b} & b & \frac{1}{b}\\
\frac{1}{b} & 1 & b & \frac{1}{b} & b\\
b & \frac{1}{b} & 1 & b & \frac{1}{b}\\
\frac{1}{b} & b & \frac{1}{b} & 1 & b\\
b & \frac{1}{b} & b & \frac{1}{b} & 1
\end{array}
\right]  .
\]
Finally, we note that, if $T_{0},T_{1}\in\mathcal{PC}_{k}$ have constant row
sums and at least one is different from $\mathbf{J}_{k}$, then%
\[
\left[
\begin{array}
[c]{cc}%
T_{0} & T_{1}\\
T_{1} & T_{0}%
\end{array}
\right]  \in\mathcal{PC}_{2k}\text{ }%
\]
has constant row sums and is inconsistent. The same holds for%
\[
\left[
\begin{array}
[c]{cc}%
T_{0} & x\mathbf{e}_{k}\\
\frac{1}{x}\mathbf{e}_{k}^{T} & 1
\end{array}
\right]  \in\mathcal{PC}_{k+1}%
\]
in which $x$ is given by (\ref{xx}), with $a_{1}$ being the constant row sums
of $T_{0}$ (assumed inconsistent)$.$ Thus, we can construct inductively
infinitely many inconsistent reciprocal matrices of any size $n>3$ with
constant row sums.

\bigskip We conclude this section by showing that, if $A\in\mathcal{PC}_{n}$
has a consistent $(n-1)$-by-$(n-1)$ principal submatrix, then the Perron
vector of $A\ $is efficient for $A.$ This means that inserting a row (and
corresponding reciprocal column) to a consistent matrix gives a new reciprocal
matrix with efficient Perron vector. This fact generalizes the result of
\cite{p6} and a special case in \cite{p2}, and gives a complete picture that
includes each.

To give the theorem, we need the following lemma.

\begin{lemma}
\label{l22}Let $r_{1},\ldots,r_{k}$ be positive numbers such that $r_{1}\geq
k,$ $r_{k}\leq k$ and $r_{1}\geq r_{2}\geq\cdots\geq r_{k}.$ If $x_{1}%
,\ldots,x_{k}$ are positive numbers such that%
\[
r_{1}+x_{1}=r_{2}+x_{2}=\cdots=r_{k}+x_{k}=1+\frac{1}{x_{1}}+\cdots+\frac
{1}{x_{k}},
\]
then $x_{1}\leq1$ and $x_{k}\geq1.$
\end{lemma}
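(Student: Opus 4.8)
The plan is to work with the common value, say $s := r_1 + x_1 = \cdots = r_k + x_k = 1 + \sum_{i=1}^k \frac{1}{x_i}$, and to analyze what the hypotheses $r_1 \ge k$ and $r_k \le k$ force upon $x_1$ and $x_k$. From the chain of equalities we immediately get $x_i = s - r_i$ for each $i$, so $x_1 \le x_2 \le \cdots \le x_k$ (since $r_1 \ge \cdots \ge r_k$), and the defining relation becomes $s = 1 + \sum_{i=1}^k \frac{1}{s - r_i}$. So everything reduces to controlling the single scalar $s$. I want to show $x_1 = s - r_1 \le 1$, i.e. $s \le 1 + r_1$, and $x_k = s - r_k \ge 1$, i.e. $s \ge 1 + r_k$.

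First I would prove $x_k \ge 1$. Suppose for contradiction $x_k < 1$; then $x_i \le x_k < 1$ for all $i$, hence $\frac{1}{x_i} > 1$ for all $i$, giving $s = 1 + \sum_{i=1}^k \frac{1}{x_i} > 1 + k$. On the other hand $s = r_1 + x_1 < r_1 + 1$… this by itself is not yet a contradiction, so instead I'd use $s = r_k + x_k < r_k + 1 \le k + 1$ (here is exactly where $r_k \le k$ enters), contradicting $s > 1 + k$. Hence $x_k \ge 1$. Symmetrically, for $x_1 \le 1$: suppose $x_1 > 1$; then $x_i \ge x_1 > 1$ for all $i$, so $\frac{1}{x_i} < 1$ and $s = 1 + \sum_{i=1}^k \frac{1}{x_i} < 1 + k$. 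But $s = r_1 + x_1 > r_1 + 1 \ge k + 1$ (here $r_1 \ge k$ is used), a contradiction. Hence $x_1 \le 1$.

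That is essentially the whole argument — it is short once one passes to the common value $s$ and uses monotonicity of $x_i = s - r_i$ in $i$. The only point requiring a little care is making sure the strict/non-strict inequalities line up so that the two contradictions are genuine (e.g. in the first part one wants $s > 1+k$ strictly versus $s < k+1$; the strictness comes for free from $x_i < 1$ being strict, since $k \ge 1$ means at least one term $\frac{1}{x_i} > 1$ is present), and checking that all the $x_i$ are positive so that $\frac{1}{x_i}$ behaves as claimed, which is part of the hypothesis. I do not anticipate a genuine obstacle here; the lemma is a clean consequence of replacing the $k$ coupled equations by the one scalar equation $s = 1 + \sum \frac{1}{s - r_i}$ together with the extreme-row-sum bounds $r_1 \ge k \ge r_k$.
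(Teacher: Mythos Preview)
Your proof is correct and is essentially identical to the paper's own argument: both introduce the common value (you call it $s$, the paper calls it $r$), use the monotonicity $x_1\le\cdots\le x_k$ coming from $x_i=s-r_i$, and derive the two contradictions $s>k+1$ versus $s<k+1$ from the assumptions $x_k<1$ (with $r_k\le k$) and $x_1>1$ (with $r_1\ge k$), respectively. The only cosmetic difference is the order in which the two cases are treated.
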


\begin{proof}
Let $r=r_{1}+x_{1}$ and suppose that the hypotheses in the statement hold.
Suppose that $x_{1}>1.$ Then, $r>k+1$ and $x_{i}>1$ for $i=2,\ldots,k.$ The
latter implies that%
\[
r=1+\frac{1}{x_{1}}+\cdots+\frac{1}{x_{k}}<k+1,
\]
a contradiction$.$

Now suppose that $x_{k}<1.$ Then, $x_{i}<1$ for $i=1,\ldots,k-1.$ This implies
that%
\[
r=1+\frac{1}{x_{1}}+\cdots+\frac{1}{x_{k}}>k+1,
\]
a contradiction, since $r=r_{k}+x_{k}<k+1.$
\end{proof}

\begin{theorem}
\label{t6}Let $A\in\mathcal{PC}_{n}.$ Suppose that $A(n)$ is consistent. Then
the Perron vector of $A$ is efficient for $A$.
\end{theorem}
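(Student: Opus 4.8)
The plan is to reduce to the normalized situation $w=\mathbf{e}_n$ via Lemmas \ref{ldiagon} and \ref{lsim} (a monomial similarity preserves consistency of $A(n)$, preserves the Perron vector up to the similarity, and preserves efficiency), and then, exploiting Lemma \ref{ldiagon} again, assume the row sums $r_1\geq r_2\geq\cdots\geq r_{n-1}$ of $B:=A(n)$ are in nonincreasing order. Since $B$ is consistent, Lemma \ref{lrowcons} gives $r_{n-1}\leq n-1$, and Remark \ref{remlem} (or the discussion after it) gives $r_1\geq n-1$. Following the proof of Theorem \ref{t1}, the last column of $A$ is forced to be
\[
\left[\begin{array}{ccccc} x & r_1-r_2+x & \cdots & r_1-r_{n-1}+x & 1\end{array}\right]^T,
\]
for the unique $x>0$ satisfying \eqref{eq1}, i.e. $r_1+x = 1+\tfrac1x+\tfrac{1}{r_1-r_2+x}+\cdots+\tfrac{1}{r_1-r_{n-1}+x}$; equivalently, setting $x_i := r_1-r_i+x$ for $i=1,\dots,n-1$ (so $x_1=x$), the numbers $x_1,\dots,x_{n-1}$ satisfy exactly the hypothesis of Lemma \ref{l22} with $k=n-1$. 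Hence $x_1=x\leq 1$ and $x_{n-1}=r_1-r_{n-1}+x\geq 1$.

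Now I want to read off strong connectivity of $G(A,\mathbf{e}_n)$ from Lemma \ref{blanq}. The key observation is that, because $B$ is consistent, $G(B,\mathbf{e}_{n-1})$ is strongly connected: indeed $\mathbf{e}_{n-1}$ is a positive vector and the off-diagonal entries of $B$ come in reciprocal pairs $a_{ij},a_{ji}=1/a_{ij}$, so for each pair $\{i,j\}$ one of the edges $i\to j$, $j\to i$ is present with $\tfrac{1}{1}\geq a_{ij}$ or $\tfrac11\geq a_{ji}$; more carefully, consistency lets us write $a_{ij}=w_i/w_j$ for a positive vector $w$, and ordering the indices by decreasing $w_i$ produces a Hamiltonian directed cycle in $G(B,\mathbf{e}_{n-1})$ (each consecutive edge goes from larger $w$ to smaller $w$, where $\mathbf{e}_{n-1}$-ratio $1\ge w_i/w_j$), so $G(B,\mathbf{e}_{n-1})$ is strongly connected. [Alternatively: $\mathbf{e}_{n-1}$ is a weighted geometric mean of the columns of the consistent $B$ only in the trivial case, so instead just invoke that the single column $w$ is efficient for $B$ and note $\mathbf{e}_{n-1}=D^{-1}w$ after the earlier normalization — in any case $G(B,\mathbf{e}_{n-1})$ is strongly connected.] It remains to connect vertex $n$ to the rest. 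From $x_{n-1}=r_1-r_{n-1}+x\geq 1$ we get the edge $(n-1)\to n$ in $G(A,\mathbf{e}_n)$ (entry $\tfrac{1}{x_{n-1}}\le 1$ in the reciprocal last row is the $(n,n-1)$ entry; wait — I must be careful: the edge $i\to n$ requires $\tfrac{1}{1}\ge a_{in}=x_i$, i.e. $x_i\le 1$, and the edge $n\to i$ requires $a_{ni}=1/x_i\le 1$, i.e. $x_i\ge 1$). So $x_1=x\leq 1$ gives the incoming edge $1\to n$, and $x_{n-1}\ge 1$ gives the outgoing edge $n\to (n-1)$. Thus in $G(A,\mathbf{e}_n)$ vertex $n$ has at least one in-edge (from $1$) and at least one out-edge (to $n-1$), and within $\{1,\dots,n-1\}$ the induced subgraph $G(B,\mathbf{e}_{n-1})$ is strongly connected; hence $G(A,\mathbf{e}_n)$ is strongly connected, so by Lemma \ref{blanq} $\mathbf{e}_n$ is efficient for $A$, which is the claim after undoing the monomial similarity.

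The main obstacle is the bookkeeping in the middle step: making sure that the indices realizing $x\le 1$ and $x_{n-1}\ge 1$ are genuinely distinct from each other and that the resulting edges have the correct orientation relative to vertex $n$, and confirming that Lemma \ref{l22} applies verbatim (its hypotheses $r_1\ge k$, $r_k\le k$, monotonicity of the $r_i$, and the chain of equalities) under the normalization we have imposed — all of which rests on $B$ being consistent, via Lemmas \ref{lrowcons} and \ref{ll1}/Remark \ref{remlem}. If one prefers to avoid the explicit Hamiltonian-cycle argument for the strong connectivity of $G(B,\mathbf{e}_{n-1})$, one can instead note that the single efficient column $w$ of $B$ satisfies $G(B,w)$ strongly connected by Lemma \ref{blanq}, and that after the diagonal normalization of Lemma \ref{ldiag} this is exactly $G(B,\mathbf{e}_{n-1})$; the digraphs coincide under positive diagonal similarity, as recorded after Lemma \ref{lsim}. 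Either way, the three ingredients — $x\le 1$, $x_{n-1}\ge 1$, and strong connectivity of the $(n-1)$-vertex subgraph — combine immediately.
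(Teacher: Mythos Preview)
Your argument has a genuine gap at the step where you claim that the induced subgraph $G(B,\mathbf{e}_{n-1})$ is strongly connected. This is false in general. Writing $B=[w_i/w_j]$ with the $w_i$ ordered so that $w_1\ge\cdots\ge w_{n-1}$, the edge $i\to j$ in $G(B,\mathbf{e}_{n-1})$ exists iff $1\ge w_i/w_j$, i.e.\ iff $w_i\le w_j$. When the $w_i$ are distinct this digraph is the transitive tournament on $\{1,\dots,n-1\}$ with vertex $1$ as a sink; it is \emph{not} strongly connected. (Equivalently: the only efficient vectors for a consistent matrix are multiples of its column $w$, so $\mathbf{e}_{n-1}$ is efficient for $B$ only when $B=\mathbf{J}_{n-1}$.) Your alternative justification also fails: the diagonal normalization you performed made $A$, not $B$, have Perron vector $\mathbf{e}_n$; nothing forces the Perron vector of $B$ to be $\mathbf{e}_{n-1}$, so you cannot import strong connectivity of $G(B,w)$ to $G(B,\mathbf{e}_{n-1})$ via the ``digraphs coincide under diagonal similarity'' remark.

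The repair is short and is exactly what the paper does. With the ordering $w_1\ge\cdots\ge w_{n-1}$ (equivalently $r_1\ge\cdots\ge r_{n-1}$), the upper-diagonal entries of $B$ satisfy $a_{i,i+1}=w_i/w_{i+1}\ge 1$, so $G(B,\mathbf{e}_{n-1})$ contains the directed \emph{path} $(n-1)\to(n-2)\to\cdots\to 1$. You have already established, via Lemma \ref{l22}, that $x=x_1\le 1$ and $x_{n-1}\ge 1$, giving the edges $1\to n$ and $n\to(n-1)$ in $G(A,\mathbf{e}_n)$. Concatenating the path with these two edges yields the Hamiltonian cycle $(n-1)\to\cdots\to 1\to n\to(n-1)$, so $G(A,\mathbf{e}_n)$ is strongly connected and $\mathbf{e}_n$ is efficient by Lemma \ref{blanq}. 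In short, your use of Lemma \ref{l22} and the two edges at vertex $n$ is correct; only the claim about strong connectivity of the $(n-1)$-vertex subgraph needs to be replaced by the Hamiltonian path observation.
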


\begin{proof}
Taking into account Lemmas \ref{ldiagon}, \ref{ldiag} and \ref{lsim}, we may
assume that the Perron vector of $A$ is $\mathbf{e}_{n}.$ Moreover, with a
possible permutation similarity on $A(n)$, we may assume $r_{1}\geq\cdots\geq
r_{n-1},$ in which $r_{i}$ is the sum of the entries in the $i$th row of
$A(n).$ Since $A(n)$ is consistent, we have
\[
A(n)=\left[
\begin{array}
[c]{c}%
w_{1}\\
\vdots\\
w_{n-1}%
\end{array}
\right]  \left[
\begin{array}
[c]{ccc}%
\frac{1}{w_{1}} & \cdots & \frac{1}{w_{n-1}}%
\end{array}
\right]  ,
\]
for some positive numbers $w_{i},$ $i=1,\ldots,n-1.$ As $A(n)$ has
nonincreasing row sums, we have $w_{1}\geq\cdots\geq w_{n-1}.$ Then, the upper
diagonal entries of $A(n)$ are greater than or equal to $1,$ implying that
$G(A(n),\mathbf{e}_{n-1})$ contains the path $(n-1)\rightarrow\cdots
\rightarrow2\rightarrow1.$ Thus, to show that $G(A,\mathbf{e}_{n})$ is
strongly connected, it is enough to see that $1\rightarrow n$ and
$n\rightarrow(n-1)$ are edges in $G(A,\mathbf{e}_{n}),$ that is, the entry of
$A$ in position $1,n$ is less than or equal to $1$ and the one in position
$n-1,n$ is greater than or equal to $1.$ By Lemma \ref{lrowcons} and Remark
\ref{remlem}, we have $r_{1}\geq n-1$ and $r_{n-1}\leq n-1.$ Since the row
sums of $A$ are constant, the claim about the entries of $A$ in positions
$1,n$ and $n-1,n$ follows from Lemma \ref{l22}.
\end{proof}

\bigskip

In this section we focused on the extension of a not well-behaved reciprocal
matrix (in particular, an inconsistent reciprocal matrix with constant row
sums) to a reciprocal matrix with inefficient Perron vector $\mathbf{e}_{n}.$
Using these ideas, in the next section, we summarize how to construct matrices
in $\mathcal{PC}_{n}$ with inefficient Perron vector and a prescribed
principal submatrix in $\mathcal{PC}_{k},$ $k<n$.

\subsection{An algorithm to extend an arbitrary reciprocal matrix to one with
an inefficient Perron vector\label{salg}}

Here we give an algorithm to construct a matrix $A\in\mathcal{PC}_{n}$ with
prescribed principal submatrix $B\in\mathcal{PC}_{k},$ $k<n,$ and with
inefficient Perron vector. We assume that $B$ is inconsistent if $k=n-1$, as
otherwise such a construction does not exist by Theorem \ref{t6} (however, if
$B$ is consistent and $k\leq n-2,$ such an $A$ does exist).

\bigskip\textbf{Algorithm }Let\textbf{ }$B\in\mathcal{PC}_{k}$ be given ($B$
is inconsistent if $k=n-1$).

\bigskip

let $S\in\mathcal{PC}_{n-1}$ be an inconsistent matrix such that
$S[\{1,\ldots,k\}]=B$

($S=B$ if $k=n-1$)

let $v$ be the Perron vector of $S$

let $R^{-1}=\operatorname*{diag}(v)$

let $C=RSR^{-1}$

let $a>0$ be arbitrary

let $A^{\prime}=\left[
\begin{array}
[c]{cc}%
C & a\mathbf{e}_{n-1}\\
\frac{1}{a}\mathbf{e}_{n-1}^{T} & 1
\end{array}
\right]  ,$

\smallskip let $D=R\oplus\lbrack c]$ for some $c>0$

let $A=D^{-1}A^{\prime}D$

\bigskip

From the discussion in Section \ref{secextin}, the Perron vector of the matrix
$A^{\prime}$ in the algorithm is inefficient for $A^{\prime}.$ Note that the
matrix $C$ has constant row sums. By Lemma \ref{lsim}, the Perron vector of
the matrix $A$ obtained by the algorithm is inefficient for $A.$ Moreover,
$A(n)=S.$

\bigskip

If $k<n-1,$ the matrix $S$ in the algorithm can be any inconsistent extension
of $B\in\mathcal{PC}_{k}$. For example, it can be constructed in $n-k-1$
steps, giving matrices $S_{1}\in\mathcal{PC}_{k+1},\ldots,S_{n-k-1}%
\in\mathcal{PC}_{n-1},$ with $S_{1}[\{1,\ldots,k\}]=B$ and $S_{i}%
[\{1,\ldots,k+i-1\}]=S_{i-1}.$ As long as $S=S_{n-k-1}$ is inconsistent, each
$S_{i}$ can have any desired Perron vector, by applying Theorem \ref{t1}, or
can be constructed to have inefficient Perron vector by applying the algorithm
(if $S_{i-1}$ is not consistent), or to have efficient Perron vector (by
applying Theorem \ref{t7} in Section \ref{s6}). Of course, different choices
of $S$ produce different matrices $A.$

\begin{example}
\label{exsubvin}We construct a matrix $A\in\mathcal{PC}_{5}$ with inefficient
Perron vector and such that
\[
A([1,2,3])=\left[
\begin{array}
[c]{ccc}%
1 & 2 & \frac{3}{5}\\
\smallskip\frac{1}{2} & 1 & 3\\
\frac{5}{3} & \frac{1}{3} & 1
\end{array}
\right]  .
\]
Let
\[
S=\left[
\begin{array}
[c]{cccc}%
1 & 2 & \smallskip\frac{3}{5} & 2\\
\smallskip\frac{1}{2} & 1 & 3 & \frac{1}{2}\\
\smallskip\frac{5}{3} & \frac{1}{3} & 1 & \frac{3}{2}\\
\smallskip\frac{1}{2} & 2 & \frac{2}{3} & 1
\end{array}
\right]
\]
be an extension in $\mathcal{PC}_{4}$ of the previous matrix in $\mathcal{PC}%
_{3}.$ The Perron vector of $S$ is$\allowbreak$ $\allowbreak$%
\[
v=\left[
\begin{array}
[c]{cccc}%
1.3348 & 1.1829 & 1.0946 & 1
\end{array}
\allowbreak\right]  ^{T}.
\]
Let $R^{-1}=\operatorname*{diag}(v).$ Then, $\allowbreak$
\[
C=RSR^{-1}=\left[
\begin{array}
[c]{cccc}%
1 & 1.7724 & 0.4920 & 1.4984\\
0.5642 & 1 & 2.7761 & 0.4227\\
2.0324 & 0.3602 & 1 & 1.3704\\
0.6674 & 2.3658 & 0.7297 & 1
\end{array}
\right]  \allowbreak.
\]
For any $a>0,$ the matrix$\allowbreak$%
\[
A^{\prime}=\left[
\begin{array}
[c]{ccccc}%
1 & 1.7724 & 0.4920 & 1.4984 & a\\
0.5642 & 1 & 2.7761 & 0.4227 & a\\
2.0324 & 0.3602 & 1 & 1.3704 & a\\
0.6674 & 2.3658 & 0.7297 & 1 & a\\
\frac{1}{a} & \frac{1}{a} & \frac{1}{a} & \frac{1}{a} & 1
\end{array}
\right]  ,
\]
has inefficient Perron vector. Taking $a=1$ and $D=R\oplus\lbrack1],\ $we get
\[
A=D^{-1}A^{\prime}D=\left[
\begin{array}
[c]{ccccc}%
1 & 2 & \smallskip\frac{3}{5} & 2 & 1.3348\\
\smallskip\frac{1}{2} & 1 & 3 & \frac{1}{2} & 1.1829\\
\smallskip\frac{5}{3} & \frac{1}{3} & 1 & \frac{3}{2} & 1.0946\\
\smallskip\frac{1}{2} & 2 & \frac{2}{3} & 1 & 1\\
0.7492 & 0.8454 & 0.9136 & 1 & 1
\end{array}
\right]  .
\]
Then, $A([1,2,3])=B$ (in fact, $A([1,2,3,4])=S$) and the Perron vector $w$ of
$A$ is inefficient for $A.$ Moreover, vertex $5$ is a sink vertex of $G(A,w).$
\end{example}

In the example, we could also have started with a $3$-by-$3$ consistent matrix
and have terminated with a $5$-by-$5$ reciprocal matrix whose Perron vector
was inefficient.

\bigskip

Finally, we observe that the Perron vector is a continuous function of the
entries of a matrix. So, if the Perron vector of $A\in\mathcal{PC}_{n}$ is
inefficient for $A$, then the Perron vector of any sufficiently small
reciprocal perturbation of $A$ is also inefficient for $A$ and for the
perturbation of $A$ (see \cite{FJ1}).

\subsection{The $4$-by-$4$ reciprocal matrices with inefficient Perron
vector\label{sec4}}

In this section we give a characterization of all matrices in $\mathcal{PC}%
_{4}$ whose Perron vector is inefficient. Recall that the Perron vector of any
matrix in $\mathcal{PC}_{n},$ with $n\leq3$, is efficient. The case $n=2$ is
trivial because the matrix is consistent. The case $n=3$ is covered by
\cite{p6, CFF}.

When the Perron vector $w$ is inefficient for $A\in\mathcal{PC}_{n},$ it may
happen that no $(n-1)$-subvector of $w$ is efficient for the corresponding
principal submatrix of $A,$ as illustrated next for $n=5$ (see also Example
\ref{ex4} for $n=6$). However, this does not happen when $n=4,$ which is an
important fact in obtaining the characterization mentioned above.

\begin{example}
Consider%
\[
A=\left[
\begin{array}
[c]{ccccc}%
1 & 2.032 & 0.53386 & 0.86855 & 0.88385\\
0.492\,3 & 1 & 2.1018 & 0.88907 & 0.83513\\
1.8731 & 0.475\,78 & 1 & 0.97616 & 0.99334\\
1.1513 & 1.\,\allowbreak124\,8 & 1.0244 & 1 & 1.\,\allowbreak0176\\
1.1314 & 1.\,\allowbreak197\,4 & 1.0067 & 0.9827 & 1
\end{array}
\right]  \in\mathcal{PC}_{5}.
\]
The vector $\mathbf{e}_{5}$ is the Perron vector of $A$ and is inefficient for
$A.$ Moreover, for any $i\in\{1,2,3,4,5\},$ $\mathbf{e}_{4}$ is inefficient
for $A(i).$ We note that vertex $4$ is a sink vertex of $G(A,\mathbf{e}_{5}).$
\end{example}

\begin{theorem}
\label{t5}Let $A\in\mathcal{PC}_{4}.$ If $w$ is the Perron vector of $A,$ then
there is an $i\in\{1,2,3,4\}$ such that $w(i)$ is efficient for $A(i).$
\end{theorem}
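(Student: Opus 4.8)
The plan is to reduce to the normalized case $w=\mathbf{e}_4$ (so $A$ has constant row sums) using Lemmas \ref{ldiagon}, \ref{ldiag}, and \ref{lsim}, which is legitimate since a monomial similarity changes neither the efficiency status of subvectors nor the relevant combinatorial structure. I would also order the rows of $A$ so that the row sums $r_1\ge r_2\ge r_3$ of $A(4)$ are nonincreasing. After this reduction, $w(i)=\mathbf{e}_3$ for every $i$, so I must show that $\mathbf{e}_3$ is efficient for at least one of the four $3$-by-$3$ principal submatrices $A(1),A(2),A(3),A(4)$. Since any matrix in $\mathcal{PC}_3$ has efficient Perron vector (by \cite{FJ1}, as noted in the introduction), the statement is equivalent to: at least one $A(i)$ has Perron vector $\mathbf{e}_3$, i.e. at least one $3$-by-$3$ principal submatrix of $A$ has constant row sums.

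The key step is therefore purely about row sums. Write $A=[a_{ij}]$ with all four row sums equal to some common value $\rho$, and with $A(4)$ already having nonincreasing row sums $r_1\ge r_2\ge r_3$. The row sums of $A(4)$ are obtained from those of $A$ by deleting the last column, so $r_i = \rho - a_{i4}$ for $i=1,2,3$; thus the ordering $a_{14}\le a_{24}\le a_{34}$ holds. More generally, for the principal submatrix $A(k)$ the three row sums (indexed by the rows $i\ne k$) are $\rho - a_{ik}$. So $A(k)$ has constant row sums if and only if the three off-diagonal entries $a_{ik}$ ($i\ne k$) in column $k$ are all equal. I would argue by contradiction: suppose none of $A(1),A(2),A(3),A(4)$ has constant row sums, i.e. in each column $k$ the three off-diagonal entries are not all equal. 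Then in particular $a_{14}<a_{34}$ (strict, using the contradiction hypothesis on column $4$ together with $a_{14}\le a_{24}\le a_{34}$; at least one inequality is strict). I would then chase the consequences of constant row sums $\rho$ through the reciprocal relations $a_{ji}=1/a_{ij}$ to derive a numerical contradiction, most plausibly by summing appropriate rows and columns and invoking the sharp bounds already established — $\rho\ge 4$ from Remark \ref{remlem}, and the convexity inequality $t+1/t\ge 2$ from Lemma \ref{ll1} — to pin down that equality must hold somewhere, forcing one column to have all equal off-diagonal entries.

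The main obstacle is exactly this last combinatorial-arithmetic argument: showing that a $4$-by-$4$ reciprocal matrix with all row sums equal cannot have all four "deleted-column" triples non-constant. I expect the cleanest route is to exploit the two extreme rows of $A(4)$. Row $1$ has the largest sum $r_1\ge 3$ among rows of $A(4)$, so by Lemma \ref{l22}-type reasoning (or directly) the entries $a_{12},a_{13}$ are "large on average," while row $3$ has $r_3\le$ something and its entries are "small on average"; combined with $a_{14}\le a_{24}\le a_{34}$ and the reciprocity $a_{21}=1/a_{12}$ etc., the constraint that all four row sums of $A$ coincide should over-determine the system unless one column collapses. If the direct inequality chase is messy, the fallback is to use Lemma \ref{l22} itself with $k=3$: applying it to $A(4)$ gives $a_{14}\le 1\le a_{34}$, and applying the analogous statement to $A$ bordered the other way (via permutation similarity, using that "constant row sums" and the ordering are permutation-invariant up to relabeling) yields comparable one-sided bounds on two other columns; stacking these bounds, together with $\rho\ge 4$, should leave no room unless some triple is constant. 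This is the step I'd spend the most care on, and I would present it as a short lemma-style contradiction rather than a case analysis over all $4!$ orderings.
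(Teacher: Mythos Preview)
Your reduction to $w=\mathbf{e}_4$ is fine, but the sentence ``Since any matrix in $\mathcal{PC}_3$ has efficient Perron vector \ldots\ the statement is equivalent to: at least one $A(i)$ has Perron vector $\mathbf{e}_3$'' is a non-sequitur, and it derails the whole plan. Knowing that the Perron vector of $A(i)$ is efficient for $A(i)$ tells you nothing about whether \emph{other} vectors, in particular $\mathbf{e}_3$, are efficient for $A(i)$. For an inconsistent $3$-by-$3$ reciprocal matrix there are infinitely many non-proportional efficient vectors, so ``$\mathbf{e}_3$ is efficient for $A(i)$'' is strictly weaker than ``$\mathbf{e}_3$ is the Perron vector of $A(i)$.'' Your claimed equivalence only gives one (trivial) implication.

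Worse, the stronger statement you set out to prove is false. Take $A$ from Example~\ref{exx0}: it has constant row sums (Perron vector $\mathbf{e}_4$), yet in each column the three off-diagonal entries are visibly distinct, so no $A(i)$ has constant row sums. Hence the contradiction you are chasing --- ``a $4$-by-$4$ reciprocal matrix with all row sums equal cannot have all four deleted-column triples non-constant'' --- simply does not exist, and no amount of Lemma~\ref{l22} or $\rho\geq 4$ bounds will manufacture one. The paper's proof does not go through row sums of the $A(i)$ at all; it works directly with the digraph $G(A,\mathbf{e}_4)$. Assuming $\mathbf{e}_3$ is \emph{inefficient} for $A(4)$ forces a specific $<1$/$>1$ pattern on $A(4)$, and then the constant-row-sum constraint (together with $\rho\geq 4$) pins down enough entries in the last row and column of $A$ to exhibit a $3$-cycle in $G(A(2),\mathbf{e}_3)$ or $G(A(3),\mathbf{e}_3)$. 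That cycle is what certifies efficiency via Lemma~\ref{blanq}, not equality of row sums.
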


\begin{proof}
By Lemmas \ref{ldiagon}, \ref{ldiag} and \ref{lsim}, we may assume that
$w=\mathbf{e}_{4}.$ Suppose that $\mathbf{e}_{3}$ is not efficient for $A(4).$
Then, by Lemma \ref{blanq}, $G(A(4),\mathbf{e}_{3})$ is not strongly
connected. Hence, $A(4)$ is permutationally similar to a matrix of one of the
following forms%
\begin{equation}
\left[
\begin{tabular}
[c]{c|cc}%
$1$ & $<1$ & $<1$\\\hline
$>1$ & $1$ & $\geq1$\\
$>1$ & $\leq1$ & $1$%
\end{tabular}
\ \right]  \text{ or }\left[
\begin{tabular}
[c]{cc|c}%
$1$ & $\geq1$ & $<1$\\
$\leq1$ & $1$ & $<1$\\\hline
$>1$ & $>1$ & $1$%
\end{tabular}
\ \right]  . \label{k}%
\end{equation}
By $>1,$ $<1,$ $\geq1$ and $\leq1,$ we denote an entry greater than $1,$ less
than $1,$ greater than or equal to $1$ and less than or equal to $1,$
respectively. By Lemmas \ref{ldiagon} and \ref{lsim}, we may assume that
$A(4)$ has one of the forms in (\ref{k}). Suppose that $A(4)$ is as the first
matrix in (\ref{k}). The proof of the other case is similar. Since $A$ has
constant row sums, and these sums are at least $4$ (by Remark \ref{remlem}),
then the entry in position $1,4$ of $A$ is greater than $1,$ and thus $A$ has
the form
\[
\left[
\begin{array}
[c]{cccc}%
1 & <1 & <1 & >1\\
>1 & 1 & \geq1 & \\
>1 & \leq1 & 1 & \\
<1 &  &  & 1
\end{array}
\right]  .
\]
Then, using similar arguments, in positions $4,2$ or $4,3$ the matrix $A$ has
an entry greater than $1.$ In the first case $A$ has the form
\[
\left[
\begin{array}
[c]{cccc}%
1 & <1 & <1 & >1\\
>1 & 1 & \geq1 & <1\\
>1 & \leq1 & 1 & \\
<1 & >1 &  & 1
\end{array}
\right]  ,
\]
implying that $1\rightarrow2\rightarrow4\rightarrow1$ is a cycle in
$G(A(3),\mathbf{e}_{3})$ and, thus, $\mathbf{e}_{3}$ is efficient for $A(3).$
In the second case, $A$ has the form%
\[
\left[
\begin{array}
[c]{cccc}%
1 & <1 & <1 & >1\\
>1 & 1 & \geq1 & \\
>1 & \leq1 & 1 & <1\\
<1 &  & >1 & 1
\end{array}
\right]  ,
\]
implying that $1\rightarrow3\rightarrow4\rightarrow1$ is a cycle in
$G(A(2),\mathbf{e}_{3})$ and, thus, $\mathbf{e}_{3}$ is efficient for $A(2).$
\end{proof}

Note that, if the Perron vector of $A$ is efficient for $A$, Theorem \ref{t5}
also follows from the results in \cite{FJ2} (see Section \ref{sdig}).

\begin{corollary}
\label{c27}Let $A\in\mathcal{PC}_{4}$ and $w$ be the Perron vector of $A.$
Then $w$ is inefficient for $A\ $if and only if $G(A,w)$ has a sink vertex.
\end{corollary}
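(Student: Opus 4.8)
\textbf{Proof proposal for Corollary \ref{c27}.}

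The plan is to prove the two implications separately, exploiting the fact that $n=4$ so that Theorem \ref{t5} applies. Taking into account Lemmas \ref{ldiagon}, \ref{ldiag} and \ref{lsim}, I would first reduce to the case $w=\mathbf{e}_{4}$, noting that a positive diagonal similarity does not change the digraph $G(A,w)$, while a permutation similarity only relabels its vertices, so having a sink vertex is preserved under monomial similarity. For the easy direction, suppose $G(A,\mathbf{e}_{4})$ has a sink vertex, say vertex $j$ after a permutation. A sink vertex has outdegree $0$, so there is no directed path leaving $j$, hence $G(A,\mathbf{e}_{4})$ is not strongly connected; by Lemma \ref{blanq}, $\mathbf{e}_{4}$ is inefficient for $A$.

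For the converse, assume $w=\mathbf{e}_{4}$ is inefficient for $A$. By Theorem \ref{t5}, there is an index $i\in\{1,2,3,4\}$ such that $\mathbf{e}_{3}=w(i)$ is efficient for $A(i)$. Now I am exactly in the situation covered by Corollary \ref{ceff} (with the roles of the indices adjusted by a permutation similarity, which by Lemma \ref{lsim} preserves both inefficiency and the structure of $G(A,w)$): the Perron vector $\mathbf{e}_{4}$ is inefficient for $A$, and its $3$-subvector obtained by deleting entry $i$ is efficient for $A(i)$. Hence, by Corollary \ref{ceff}, vertex $i$ is a sink vertex of $G(A,\mathbf{e}_{4})$. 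This establishes the converse and completes the proof.

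The only point requiring a little care is the bookkeeping around monomial similarities: Corollary \ref{ceff} is stated for the index $n$, so strictly one permutes $A$ so that the distinguished index $i$ becomes the last one, applies the corollary, and then permutes back, observing that a permutation similarity carries a sink vertex of $G(A,w)$ to a sink vertex of the permuted digraph. None of this is substantive — the real content is entirely in Theorem \ref{t5} (the $4$-by-$4$ fact that some $3$-subvector is always efficient) combined with Corollary \ref{ceff}.
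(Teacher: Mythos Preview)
Your proof is correct and follows essentially the same route as the paper's own proof, which simply cites Lemma \ref{blanq} for the ``if'' direction and Theorem \ref{t5} together with Corollary \ref{ceff} for the ``only if'' direction. Your added reduction to $w=\mathbf{e}_{4}$ and the explicit permutation bookkeeping are harmless elaborations (and in fact not strictly needed, since Theorem \ref{t5} and Corollary \ref{ceff} are already stated for general $w$), but the substance is identical.
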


\begin{proof}
The "if" claim follows from Lemma \ref{blanq}. The "only if" claim follows
from Theorem \ref{t5} and Corollary \ref{ceff}.
\end{proof}

\bigskip

Any reciprocal matrix is similar, via a positive diagonal matrix, to a
reciprocal matrix with constant row sums (Lemma \ref{ldiag}). Thus, from
Corollary \ref{c27} (and Lemma \ref{lsim}), it follows that the matrices in
$\mathcal{PC}_{4}$ whose Perron vector is inefficient are those that are
diagonal similar, via a positive diagonal matrix, to a reciprocal matrix with
constant row sums and with a row in which all the off-diagonal entries are
greater than $1.$ We formalize this next.

\bigskip

Denote by $\mathcal{S}_{4}$ the set of matrices $A\in\mathcal{PC}_{4}$ with
Perron vector $\mathbf{e}_{4}$ (that is, with constant row sums) and such that
there is a row in which all the off-diagonal entries are greater than $1.$
Denote by $\mathcal{D}_{4}$ the set of positive diagonal matrices of size $4.$

\begin{theorem}
\label{th4by4}The Perron vector of $A\in\mathcal{PC}_{4}$ is inefficient for
$A$ if and only if $A=D^{-1}BD$ for some $D\in\mathcal{D}_{4}$ and some
$B\in\mathcal{S}_{4}.$
\end{theorem}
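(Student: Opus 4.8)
The plan is to deduce this characterization directly from Corollary \ref{c27} together with the monomial-similarity invariance of efficiency (Lemma \ref{lsim}) and the fact that every reciprocal matrix is diagonally similar to one with constant row sums (Lemma \ref{ldiag}). First I would prove the ``if'' direction: suppose $A = D^{-1}BD$ with $D \in \mathcal{D}_4$ and $B \in \mathcal{S}_4$. The Perron vector of $B$ is $\mathbf{e}_4$, and by Lemma \ref{ldiagon} (or the remark following Lemma \ref{lsim}) the Perron vector of $A$ is $D^{-1}\mathbf{e}_4$, while the digraphs $G(B,\mathbf{e}_4)$ and $G(A, D^{-1}\mathbf{e}_4)$ coincide. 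Since $B \in \mathcal{S}_4$, there is a row, say row $k$, in which all off-diagonal entries exceed $1$; equivalently, for every $j \neq k$ the entry $b_{kj} > 1$, so $\frac{(\mathbf{e}_4)_k}{(\mathbf{e}_4)_j} = 1 < b_{kj}$, meaning there is no edge $k \to j$ in $G(B,\mathbf{e}_4)$. Hence $k$ is a sink vertex, so $G(B,\mathbf{e}_4)$ — and therefore $G(A, D^{-1}\mathbf{e}_4)$ — is not strongly connected, and by Lemma \ref{blanq} the Perron vector of $A$ is inefficient.

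For the ``only if'' direction, suppose the Perron vector $w$ of $A \in \mathcal{PC}_4$ is inefficient for $A$. By Corollary \ref{c27}, $G(A,w)$ has a sink vertex $k$. By Lemma \ref{ldiag} there is a positive diagonal matrix $E$ such that $B := EAE^{-1}$ has Perron vector $\mathbf{e}_4$, i.e.\ constant row sums; set $D = E^{-1}$, so $A = D^{-1}BD$ with $D \in \mathcal{D}_4$. Again the digraphs $G(A,w)$ and $G(B,\mathbf{e}_4)$ coincide (the remark after Lemma \ref{lsim}), so vertex $k$ is a sink of $G(B,\mathbf{e}_4)$ as well; this means that for all $j \neq k$ there is no edge $k \to j$, i.e.\ $\frac{1}{1} = \frac{(\mathbf{e}_4)_k}{(\mathbf{e}_4)_j} < b_{kj}$, so every off-diagonal entry in row $k$ of $B$ is greater than $1$. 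Therefore $B \in \mathcal{S}_4$, completing the proof.

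The argument is essentially a bookkeeping exercise once Corollary \ref{c27} is in hand; the only point needing care is the equivalence between ``vertex $k$ is a sink of $G(A,w)$'' and ``row $k$ of the constant-row-sum representative has all off-diagonal entries $>1$,'' which hinges on the definition of $G(A,w)$ (edge $i \to j$ iff $\frac{w_i}{w_j} \geq a_{ij}$) applied with $w = \mathbf{e}_4$: absence of the edge $k \to j$ is exactly $1 < b_{kj}$. I would also note explicitly that the diagonal scaling linking $A$ to its constant-row-sum form does not alter efficiency (Lemma \ref{lsim}) nor the digraph, so no information is lost in passing between $A$ and $B$. I do not anticipate a genuine obstacle here — the real content of the $4\times 4$ characterization lies in Theorem \ref{t5} and Corollary \ref{c27}, which have already been established; this theorem is the clean restatement of that content in terms of the explicitly defined classes $\mathcal{S}_4$ and $\mathcal{D}_4$.
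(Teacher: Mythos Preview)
Your proposal is correct and follows essentially the same approach as the paper, which derives the theorem from Corollary \ref{c27}, Lemma \ref{ldiag}, and Lemma \ref{lsim} (together with the digraph-invariance remark). You have simply spelled out in detail the bookkeeping that the paper compresses into the paragraph preceding the theorem statement.
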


We next show that the example presented in \cite{blanq2006} of a matrix in
$\mathcal{PC}_{4}$ with inefficient Perron vector follows from Theorem
\ref{th4by4}.

\begin{example}
Let%
\[
A=\left[
\begin{array}
[c]{cccc}%
1 & 2 & 6 & 2\\
\frac{1}{2} & 1 & 4 & 3\\
\frac{1}{6} & \frac{1}{4} & 1 & \frac{1}{2}\\
\frac{1}{2} & \frac{1}{3} & 2 & 1
\end{array}
\right]  .
\]
The Perron vector of $A$ is $\allowbreak w=\left[
\begin{array}
[c]{cccc}%
2.9038 & 2.057 & 0.48282 & 1
\end{array}
\right]  ^{T}.$ For $D^{-1}=\operatorname*{diag}(w),$ we have
\[
B=DAD^{-1}=\left[
\begin{array}
[c]{cccc}%
1 & 1.4168 & 0.997\,64 & 0.68876\\
0.70584 & 1 & 0.938\,89 & 1.4585\\
1.0024 & 1.065\,1 & 1 & 1.0356\\
1.4519 & 0.68565 & 0.96563 & 1
\end{array}
\right]  .
\]
The matrix $B$ has Perron vector $\mathbf{e}_{4}$ and all the off-diagonal
entries in the third row of $B$ are greater than $1$ (vertex $3$ is a sink
vertex of $G(A,w)=G(B,\mathbf{e}_{4})$). Thus, the Perron vector of $A$ is
inefficient for $A.$
\end{example}

\section{Extending a reciprocal matrix to one with an efficient Perron
vector\label{s6}}

We next give a constructive proof of the existence of a matrix in
$\mathcal{PC}_{n}$ with efficient Perron vector, extending any given matrix in
$\mathcal{PC}_{n-1}.$ We need the following result that is used in the
construction of such an extension.

\begin{lemma}
\label{lwb}Let $B\in\mathcal{PC}_{k}$. Then, there is a positive diagonal
matrix $D\in M_{k}$ such that $DBD^{-1}$ is well-behaved and $\mathbf{e}_{k}$
is efficient for $DBD^{-1}$.
\end{lemma}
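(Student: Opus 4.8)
The plan is to produce, from an arbitrary $B \in \mathcal{PC}_k$, a positive diagonal similarity that simultaneously makes the matrix well-behaved and keeps $\mathbf{e}_k$ efficient. The natural first move is to reduce to the constant-row-sum case is \emph{not} available here, since by Lemma \ref{lrsum} a constant-row-sum matrix other than $\mathbf{J}_k$ is never well-behaved; so the diagonal $D$ we seek must genuinely \emph{unbalance} the row sums. Instead I would first use Lemma \ref{ldiag} (and Lemma \ref{lsim}) to assume without loss of generality that $B$ itself has constant row sums, say all equal to $r$, and that $B \neq \mathbf{J}_k$ (the case $B = \mathbf{J}_k$ is trivial: $\mathbf{J}_k$ is already well-behaved and $\mathbf{e}_k$ is efficient for it). Now the task is to find a positive diagonal $D$ so that $C := DBD^{-1}$ is well-behaved of type I, i.e. $r_1(C) - r_k(C) \ge 1$, while $\mathbf{e}_k$ remains efficient for $C$. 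Note that $\mathcal{E}(C) = D\,\mathcal{E}(B)$ by Lemma \ref{lsim}, so $\mathbf{e}_k \in \mathcal{E}(C)$ is equivalent to $D^{-1}\mathbf{e}_k = \operatorname{diag}(D)^{-1} \in \mathcal{E}(B)$; that is, we need the diagonal of $D^{-1}$ to be an efficient vector for $B$.

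The key step, then, is to choose an efficient vector $v$ for $B$ that is ``spread out enough'' to force the type-I gap $r_1 - r_k \ge 1$ in $C = \operatorname{diag}(v)^{-1} B \operatorname{diag}(v)$, and this is where I expect the main work to lie. Since $B$ is inconsistent (being a non-$\mathbf{J}_k$ matrix with constant row sums), it has infinitely many non-proportional efficient vectors; in particular, by \cite{blanq2006, FJ3} any weighted geometric mean of columns of $B$ is efficient, and one can also appeal to the inductive generation of all efficient vectors in \cite{FJ2}. I would consider the one-parameter family $v^{(t)}$ obtained, say, as a suitable perturbation of $\mathbf{e}_k$ within $\mathcal{E}(B)$ — for instance a geometric mean $v^{(t)}$ of two columns with weights depending on $t$, or a continuous path in $\mathcal{E}(B)$ starting at $\mathbf{e}_k$ (which lies in $\mathcal{E}(B)$ since $B$ has constant row sums, hence $\mathbf{e}_k$ is its Perron vector, and the Perron vector of a matrix with $2 \le k$... actually we must be careful: $\mathbf{e}_k$ need not be efficient for $B$, so I would instead start from a known efficient $v^{(0)}$, e.g. the geometric mean of all columns). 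Along such a path the quantity $g(t) := r_1(C^{(t)}) - r_k(C^{(t)})$ is continuous, and the crux is to show $\sup_t g(t) \ge 1$. I would argue this by letting the parameter degenerate so that $v^{(t)}$ becomes very unbalanced (one entry tending to $0$ or $\infty$ relative to the others); then some row sum of $C^{(t)} = \operatorname{diag}(v)^{-1} B \operatorname{diag}(v)$ — namely the row corresponding to the small entry of $v$, whose off-diagonal entries are all scaled up — blows up, while the smallest row sum stays bounded below by $k \cdot (\text{geometric-mean-type bound})$ or simply by the fact that every row sum of a reciprocal matrix is at least... no, individual row sums can be small; but the \emph{maximum} row sum $r_1$ certainly tends to $\infty$, so $g(t) \to \infty$. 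By the intermediate value theorem there is a $t^\*$ with $g(t^\*) = 1$ (or $\ge 1$), and $D := \operatorname{diag}(v^{(t^\*)})^{-1}$ does the job: $C = DBD^{-1}$ is well-behaved of type I and, since $v^{(t^\*)} \in \mathcal{E}(B)$, Lemma \ref{lsim} gives $\mathbf{e}_k = D v^{(t^\*)} \in \mathcal{E}(DBD^{-1})$.

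The main obstacle is verifying that the path stays inside $\mathcal{E}(B)$ while simultaneously driving $r_1 - r_k$ up to $1$: efficiency is a ``strong-connectivity of $G(B,v)$'' condition (Lemma \ref{blanq}) and is not obviously preserved under the crude scalings that make $g(t)$ large, so some care is needed to pick the degeneration direction so that $G(B, v^{(t)})$ stays strongly connected — e.g. by using geometric means of columns, which are efficient for \emph{all} $t$ by \cite{blanq2006, FJ3}, thereby sidestepping any digraph bookkeeping. A clean version of the argument is therefore: take $v^{(t)} = \big(\prod_j (\text{col}_j B)^{\,\alpha_j(t)}\big)$ with weights $\alpha_j(t) \ge 0$ summing to $1$, let the weights concentrate on a single well-chosen column index $\ell$ (so $v^{(t)} \to \text{col}_\ell B$), observe $g(t)$ is continuous and $g(1) = r_1(B[\text{col}_\ell\text{-conjugation}]) - r_k(\cdot)$ which one shows exceeds $1$ using inconsistency (or, if even this limit does not reach $1$, enlarge the search to geometric means involving reciprocals of columns / use Lemma \ref{lf1} to see the limiting configuration must overshoot), and conclude by continuity. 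Once the existence of the required $v$ is in hand, the rest is immediate from Lemma \ref{lsim} and the definition of well-behaved.
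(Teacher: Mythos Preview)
Your argument has a genuine gap at its main step. You want to drive $g(t) = r_1(C^{(t)}) - r_k(C^{(t)})$ to at least $1$ by letting $v^{(t)}$ become ``very unbalanced (one entry tending to $0$ or $\infty$ relative to the others)'', but the family you use to guarantee efficiency --- weighted geometric means $\prod_j (\text{col}_j B)^{\alpha_j(t)}$ with $\alpha_j(t)\ge 0$, $\sum_j \alpha_j(t)=1$ --- is compact: as the weights concentrate on a single index $\ell$, $v^{(t)}$ converges to the fixed vector $\text{col}_\ell B$, not to anything unbounded. So $g(t)$ does \emph{not} tend to $\infty$, and you have given no reason why its supremum over this family is $\ge 1$. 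Your fallback (``enlarge the search to geometric means involving reciprocals of columns / use Lemma \ref{lf1}'') is not an argument either: reciprocals of columns are not covered by \cite{blanq2006, FJ3}, and Lemma \ref{lf1} says nothing about $r_1 - r_k$.

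The paper's proof bypasses all of this with a single explicit choice: take $D^{-1}=\operatorname{diag}(\text{col}_j B)$ for any fixed $j$. Then $B':=DBD^{-1}$ has its $j$th row and $j$th column identically equal to $\mathbf{e}_k^T$ and $\mathbf{e}_k$, so $\mathbf{e}_k$ is a column of $B'$ and hence efficient (\cite{FJ1}). Moreover the $j$th row sum of $B'$ equals $k$, so the \emph{smallest} row sum satisfies $r_k\le k$. If $r_1-r_k<1$ then each term $\frac{1}{1+r_k-r_i}\ge 1$, giving $f(1+r_k-r_1)\ge k - r_k\ge 0$; thus $B'$ is well-behaved (possibly of type II, not necessarily type I). No continuity, no paths, no inconsistency hypothesis needed. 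Ironically, the limit point $v=\text{col}_\ell B$ of your degeneration, after your preliminary reduction, \emph{is} exactly this construction --- you just needed to analyze that single endpoint directly rather than try to pass through infinity.
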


\begin{proof}
Let $D^{-1}=\operatorname*{diag}(b_{j}),$ in which $b_{j}$ is the $j$th column
of $B.$ Then, the $j$th column and the $j$th row of $B^{\prime}=DBD^{-1}$ are
$\mathbf{e}_{k}$ and $\mathbf{e}_{k}^{T}$, respectively$.$ This implies that
$\mathbf{e}_{k}$ is efficient for $B^{\prime},$ as any column of $B^{\prime}$
is efficient for $B^{\prime}$ (see \cite{FJ1}).

Let $r_{1}\geq$ $\cdots\geq r_{k}$ be the row sums of $B^{\prime}$. Since the
sum of the entries in the $j$th row of $B^{\prime}$ is $k,$ we have $r_{k}\leq
k.$ Suppose that $r_{1}-r_{k}<1.$ Then, for $i=1,\ldots,k,$ we have $0\leq
r_{i}-r_{k}<1,$ implying%
\[
\frac{1}{1+r_{k}-r_{i}}\geq1.
\]
Thus,%
\[
\sum_{i=1}^{k}\frac{1}{1+r_{k}-r_{i}}\geq k\geq r_{k},
\]
implying $f(1+r_{k}-r_{1})\geq0$, in which $f$ is as in (\ref{ff1}). Thus,
$B^{\prime}$ is well-behaved.
\end{proof}

\begin{theorem}
\label{t7}Let $B\in\mathcal{PC}_{n-1}$. Then, there is $A\in\mathcal{PC}_{n}$
with efficient Perron vector and such that $A(n)=B$.
\end{theorem}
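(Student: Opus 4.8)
The plan is to reduce to the normalized setting and then invoke the machinery built up in Sections \ref{s5} and \ref{s6}. First I would apply Lemma \ref{lwb} to replace $B$ by a diagonally similar matrix $B' = DBD^{-1}$ that is well-behaved and for which $\mathbf{e}_{n-1}$ is efficient; by Lemma \ref{ldiagon} and Lemma \ref{lsim}, an extension of $B'$ with efficient Perron vector yields, after the inverse diagonal similarity (extended by $[1]$ in the last coordinate), an extension of $B$ with efficient Perron vector. So it suffices to treat the case where $B$ itself is well-behaved and $\mathbf{e}_{n-1}$ is efficient for $B$; after a further permutation similarity we may assume the row sums satisfy $r_1 \geq \cdots \geq r_{n-1}$.

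The core step is then exactly the converse half of Theorem \ref{t2}. Since $B$ is well-behaved, by Theorem \ref{t1} there is a unique $A \in \mathcal{PC}_n$ with Perron vector $\mathbf{e}_n$ and $A(n) = B$, whose last column has the form (\ref{col}) with $x>0$ the unique solution of (\ref{eq1}). Because $\mathbf{e}_{n-1}$ is efficient for $A(n) = B$ and $A(n)$ is well-behaved, Theorem \ref{t2} (the contrapositive of its converse direction) tells us that $\mathbf{e}_n$ is efficient for $A$. That gives the desired matrix $A$, and I would want to unwind the reductions to confirm that the resulting matrix indeed restricts to $B$ on deleting row and column $n$ — this works because the diagonal similarity $D$ from Lemma \ref{lwb} acts only on the first $n-1$ coordinates, and the permutation similarity can be absorbed as well since well-behavedness and efficiency of $\mathbf{e}$ are permutation-similarity invariant.

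The main obstacle I anticipate is not any single hard computation but rather the careful bookkeeping of the similarity transformations: one must check that well-behavedness of $B'$ together with efficiency of $\mathbf{e}_{n-1}$ for $B'$ are precisely the two hypotheses needed to drive Theorem \ref{t2}, and that the extension constructed for $B'$ transports back to a genuine extension of $B$ (not merely of something similar to $B$). A secondary point worth stating explicitly is why Lemma \ref{lwb} is really needed: an arbitrary $B \in \mathcal{PC}_{n-1}$ need not be well-behaved (Example \ref{ex1}), and for a not-well-behaved $B$ with $\mathbf{e}_{n-1}$ efficient, Theorem \ref{t2} would force the unique constant-row-sum extension to have \emph{inefficient} Perron vector — so the diagonal rescaling in Lemma \ref{lwb} is exactly what converts a possibly pathological $B$ into one amenable to this construction, while preserving the class of extensions up to monomial similarity.
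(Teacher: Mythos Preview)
Your proposal is correct and follows essentially the same route as the paper's own proof: invoke Lemma~\ref{lwb} to pass to a diagonally similar matrix that is well-behaved with $\mathbf{e}_{n-1}$ efficient, build the unique extension with Perron vector $\mathbf{e}_n$ via Theorem~\ref{t1} (equivalently, the construction in (\ref{col})--(\ref{eq1})), and then apply the contrapositive of the converse part of Theorem~\ref{t2} to conclude efficiency. Your additional remarks on the bookkeeping of the monomial similarities and on why Lemma~\ref{lwb} is genuinely needed are accurate and simply make explicit what the paper leaves to Lemmas~\ref{ldiagon} and~\ref{lsim}.
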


\begin{proof}
By Lemma \ref{lwb}, there is a positive diagonal matrix $D$ such that
$B^{\prime}=DBD^{-1}$ is well-behaved and $\mathbf{e}_{n-1}$ is efficient for
$B$. With a possible permutation similarity on $B^{\prime}$, we assume that
$B^{\prime}$ has nonincreasing row sums $r_{1}\geq\cdots\geq r_{n-1}.$ Taking
into account Lemmas \ref{ldiagon} and \ref{lsim}, we consider $B=B^{\prime}.$
Let $A\in\mathcal{PC}_{n}$ be such that $A(n)=B$ and its last column is as in
(\ref{col}) with $x>0$ satisfying (\ref{eq1}). Note that the existence of (a
unique) such $x$ is ensured by Lemma \ref{lauxder}. Then, $\mathbf{e}_{n}$ is
the Perron vector of $A.$ Since $B$ is well-behaved and $\mathbf{e}_{n-1}$ is
efficient for $B,$ by Theorem \ref{t2}, $\mathbf{e}_{n}$ is efficient for $A.$
\end{proof}

\bigskip

Next we construct a matrix in $\mathcal{PC}_{4}$ with efficient Perron vector
and a prescribed principal submatrix in $\mathcal{PC}_{3}$. We consider the
case in which the well-behaved matrix appearing in the construction is of type I.

\begin{example}
Let $B$ be the matrix in (\ref{A4}), which is not well-behaved. We construct
$A\in\mathcal{PC}_{4}$ with efficient Perron vector and such that $A(4)=B$.
Recall that in Example \ref{exx0} the matrix $B$ was extended to a matrix in
$\mathcal{PC}_{4}$ with inefficient Perron vector. Let $D^{-1}%
=\operatorname*{diag}\left(  \frac{1}{5},1,\frac{9}{2}\right)  .$ The matrix%
\[
B^{\prime}=DBD^{-1}=\left[
\begin{array}
[c]{ccc}%
1 & 1 & 114.75\\
1 & 1 & 1\\
0.008715 & 1 & 1
\end{array}
\right]
\]
is well-behaved of type I and $\mathbf{e}_{3}$ is efficient for $B^{\prime}.$
Let\allowbreak\
\[
A^{\prime}=\left[
\begin{array}
[c]{cccc}%
1 & 1 & 114.75 & x\\
1 & 1 & 1 & x+114.75-1\\
0.008715 & 1 & 1 & x+114.75-0.008715\\
\frac{1}{x} & \frac{1}{x+114.75-1} & \frac{1}{x+114.75-0.008715} & 1
\end{array}
\right]  ,
\]
with
\[
x+2+114.75=1+\frac{1}{x}+\frac{1}{x+114.75-1}+\frac{1}{x+114.75-0.008715}.
\]
A calculation gives $x=0.00864,$ implying that
\[
A^{\prime}=\left[
\begin{array}
[c]{cccc}%
1 & 1 & 114.75 & 0.00864\\
1 & 1 & 1 & 113.76\\
0.008715 & 1 & 1 & 114.75\\
115.74 & 0.00879 & 0.008715 & 1
\end{array}
\allowbreak\right]  \allowbreak.\allowbreak
\]
The vector $\mathbf{e}_{4}$ is the Perron vector of $A^{\prime}$ and is
efficient for $A^{\prime}.$ Thus,
\begin{align*}
A  &  =\left(  D^{-1}\oplus\left[  1\right]  \right)  A^{\prime}\left(
D\oplus\left[  1\right]  \right) \\
&  =\left[
\begin{array}
[c]{cccc}%
1 & \frac{1}{5} & \smallskip\frac{51}{10} & 0.001728\\
5 & 1 & \smallskip\frac{2}{9} & 113.76\\
\smallskip\frac{10}{51} & \frac{9}{2} & 1 & 516.38\\
578.7 & 0.00879 & 0.001937 & 1
\end{array}
\allowbreak\right]
\end{align*}
satisfies $A(4)=B$ and the Perron vector of $A$ is efficient for $A.$
\end{example}

\bigskip

Given $B\in\mathcal{PC}_{n-1},$ we have shown how to construct a matrix
$A\in\mathcal{PC}_{n}$ with efficient Perron vector and such that $A(n)=B.$ If
$B\in\mathcal{PC}_{k},$ with $k<n-1,$ and we want to construct $A\in
\mathcal{PC}_{n}$ with efficient Perron vector and such that $A[\{1,\ldots
,k\}]=B,$ we may consider an arbitrary matrix $S\in\mathcal{PC}_{n-1}$ with
$S[\{1,\ldots,k\}]=B$, and proceed as above to obtain $A$ with efficient
Perron vector and such that $A(n)=S$.

\section{Conclusions\label{s8}$\allowbreak$}

When prioritizing alternatives, one important property that the weight vector
extracted from the reciprocal matrix, of the pair-wise comparisons, should
have is efficiency. One of the most used weighting methods employs the right
Perron vector of the reciprocal matrix as the vector of weights. It is known
that such a vector may not be efficient.

The reciprocal matrix constructed in practice from which a weight vector is
obtained may be just partially known. Here we study the existence of an
extension of a reciprocal matrix with efficient/inefficient Perron vector. We
conclude that any reciprocal matrix can be extended to one with inefficient
Perron vector, except if it is consistent of size $n-1,$ in which case it is
shown that there is no reciprocal extension of size $n$ with inefficient
Perron vector. We also show that any reciprocal matrix can be extended to one
with efficient Perron vector. Our analysis gives a procedure to construct such
extensions. As a consequence, we obtain structured classes of reciprocal
matrices with inefficient Perron vector, of which the family presented in
\cite{bozoki2014} is a particular case.

We give sufficient conditions for the digraph $G(A,w)$ (see Section
\ref{sdig}), associated with an $n$-by-$n$ reciprocal matrix $A$ with
inefficient Perron vector $w$, to have a sink vertex, namely, the inefficient
Perron vector has a subvector obtained by deleting one entry that is either
the Perron vector of the corresponding principal submatrix of $A$ or is
efficient for this submatrix$.$ Though this latter condition does not always
happen for $n>4,$ as illustrated, it holds when $n=4.$ This implies that the
Perron vector $w$ of a $4$-by-$4$ reciprocal matrix $A$ is inefficient for
$A\ $if and only if the associated digraph $G(A,w)$ has a sink vertex. Several
examples illustrating the theoretical results are provided throughout the paper.

This work leaves some relevant questions for future study, such as determining
all extensions of a given reciprocal matrix with inefficient Perron vector and
if there are such extensions for which the associated digraph has a source
vertex (we conjecture the answer is negative). Also, studying the existence of
reciprocal completions with efficient (inefficient) Perron vector for other
patterns of the specified entries is an interesting problem to consider.

\bigskip

\textbf{Acknowledgement} We would like to thank Xingyu Wang, a student at the
College of William and Mary, who provided us Example \ref{ex4} and helpful
simulations about the efficiency of the Perron vector.

\bigskip

\bigskip

\end{document}